\journalname{Numer.\ Math.}
\def\pput(#1,#2)#3{\noindent\smash{\raise#2pt\hbox to 0pt
   {\kern #1pt #3\hss}}\ignorespaces}
\def\Im{\hbox{\rm Im\kern .7pt}}
\def\Re{\hbox{\rm Re\kern .7pt}}
\def\arg{\hbox{\rm arg\kern .2pt}}
\def\twopi{2\kern.2 pt\pi}
\def\Gex{\Gamma_{\scriptsize\rm exit}}
\title{Representation of conformal maps by rational functions}
\author{Abinand Gopal \and Lloyd N. Trefethen}
\institute{A. Gopal\at
Mathematical Institute\\
University of Oxford\\
Oxford OX2 6GG, UK\\
\email{gopal@maths.ox.ac.uk} \\
\and
L. N. Trefethen \at
Mathematical Institute\\
University of Oxford\\
Oxford OX2 6GG, UK\\
\email{trefethen@maths.ox.ac.uk} \\
}
\date{Received: date / Accepted: date}
\begin{document}

\maketitle


\begin{abstract}
The traditional view in numerical conformal mapping is that
once the boundary correspondence function has been found, the
map and its inverse can be evaluated by contour integrals.
We propose that it is much simpler, and 10--1000 times faster, to
represent the maps by rational functions computed by the
AAA algorithm.  To justify this claim, first
we prove a theorem establishing root-exponential convergence
of rational approximations
near corners in a conformal map, generalizing a result of
D. J. Newman in 1964.
This leads to the new algorithm for approximating
conformal maps of polygons.  Then we turn to smooth
domains and prove a sequence of four
theorems establishing that in any conformal
map of the unit circle onto a region with a long and slender part,
there must be a singularity or loss of univalence
exponentially close to the boundary, and polynomial approximations
cannot be accurate unless of exponentially high degree.  This 
motivates the application of the new algorithm to smooth
domains, where it is again found to be highly effective.
\keywords{conformal mapping \and rational approximation \and
barycentric formula \and AAA algorithm\and Schwarz--Christoffel
Toolbox}
\end{abstract}
\subclass{30C30 \and 41A20 \and 65E05}

\section{Introduction}
\label{sec-intro}
Traditionally in numerical conformal mapping, one first
determines the boundary correspondence function, the
one-to-one mapping between the boundaries of the domain
and the image regions, and then one evaluates the map
or its inverse in the interior by methods such as Cauchy
integrals~\cite{cmft,gaier,hqr,henrici,wegmann}.  If the
domains are smooth, the first step is often carried out
by solving an integral equation.  Most domains of interest
in practice are not smooth, and the corners that appear so
often bring in new challenges that traditionally would be
addressed by the exploitation of the local structure of each
singularity~\cite{bds,lehman,pwh}.  In the extreme case of the
map of a disk or a half-plane onto a polygon, the problem is
``all singularities,'' and here one may make use of a numerical
implementation of the Schwarz--Christoffel (henceforth SC)
formula, for which the standard software is Driscoll's SC
Toolbox in MATLAB~\cite{toolbox,SCbook,papstyl}.

In this paper we propose a different approach to the second phase
of computing a conformal map.  Instead of evaluating Cauchy or SC
integrals, we propose the use of rational functions to represent
the maps, one rational function in the forward direction and
another in the inverse direction.  Instead of exploiting the
structure of singularities, we ignore their presence and let the
rational approximations do the work.  This is not asymptotically
optimal for very high accuracies, but in the practical regime,
it is remarkable how efficient it can be.  For a polygonal region
with around six corners, we find that rational functions of degree
30 to 100 typically suffice for mapping to 7-digit accuracy,
and the evaluation time is on the order of a microsecond per
point mapped on a laptop.  Mapping tens of thousands of points
back and forth accordingly takes a fraction of a second.

Until recently, no fast method was available for constructing
such rational approximations, but this situation has changed with
the appearance of the AAA algorithm~\cite{AAA}.  This algorithm
represents rational functions stably by quotients of partial
fractions rather than polynomials, with the necessary support
points determined adaptively.\footnote{The name comes from
``adaptive Antoulas--Anderson'' and is pronounced ``triple-A.''}
If $Z$ and $F$ are sets of, say, 5000 corresponding points on the
boundaries of the domain and target regions, the command {\tt r =
aaa(F,Z)} in Chebfun~\cite{chebfun} will typically construct a
rational function mapping $Z$ to $F$ in less than a second on
a laptop.  A {\tt aaa} code in MATLAB is listed in~\cite{AAA},
and a Python implementation is available from the first author.

The mathematical basis of our method is a new result, proved in
Section 2, that generalizes a theorem published by Donald Newman in
1964~\cite{newman}.\ \ Newman
showed that the minimax error $E_n$ for
approximation of $f(x) = |x|$ on $[-1,1]$ by type $(n,n)$ rational
functions decreases at the rate $\exp(-A\sqrt n\kern .8pt)$ for some
$A>0$ as $n\to\infty$.  The sharp constant is $A = \pi$, and more
generally, for approximation on $[-1,1]$ of $f(x) = |x|^\alpha$ with
$\alpha>0$, Stahl~\cite{stahl03} showed that root-exponential
convergence occurs at the rate
\begin{equation}
E_n = O(e^{-\pi \sqrt{\alpha n}}\kern 1.5pt).
\label{stahl}
\end{equation}
This estimate is sharp unless $\alpha$ is an even integer,
in which case $f$ reduces to a polynomial. In the context
of our conformal maps, the essential problem is that of
approximating the complex function $x^\alpha$ on a one-sided
complex neighborhood of $[-1,1]$ rather than the real function
$|x|^\alpha$ on $[-1,1]$ alone.  In Section~2, applying a
different method from Newman's, we prove that the convergence
rate for this problem is also root-exponential.  Section~3 then
applies this result to conformal maps of polygons.

In Section~4 we turn to regions with analytic boundaries.  We start
from a theoretical perspective, showing in a sequence of
four theorems that because of the
``crowding phenomenon'' of conformal mapping, the conformal map
of a region with a smooth boundary necessarily has a singularity
or loss of univalence exponentially close to the boundary if the
region contains a part that is long and narrow in a certain sense.
Under the same assumptions, polynomial approximations are proved
to be inaccurate unless their degree is exponentially large.
We believe these theorems are a significant contribution to the
literature of crowding.  In the context of the present paper,
their role is to
motivate Section~5, which applies our rational function
method to conformal maps of domains with analytic boundaries.
Section~\ref{numerics} examines the accuracy of our approximations.

One of the consequences of our rational approximations
is that they eliminate much of
the practical difference between conformal mapping methods
in the ``forward'' and ``inverse'' directions.  For example,
the Schwarz--Christoffel formula maps from a half-plane or a disk
to the problem domain, which traditionally introduces an asymmetry
in the cost of evaluating the map in the two directions.  Similarly,
there have been computational consequences of the fact 
that integral equations from the problem domain to the disk
are often linear, whereas in the direction from the disk to the
problem domain they are usually nonlinear.
With the techniques introduced here, the
directionality of the underlying representation becomes less important, since,
once the boundary correspondence map has been computed, the
initial representation is discarded in favor of
rational functions in the two directions.

\section{Rational approximation of \boldmath$x^\alpha$}

In this section we prove a result that is the basis of our
algorithm, establishing that rational approximations can be
highly efficient at representing a conformal map near a corner.
The local behavior of a conformal map with straight sides at a
corner of angle $\alpha\kern .3pt \pi$ is of type $x^\alpha$ (or
$x^{1/\alpha}$ in the inverse direction), as can be established
from the Schwarz reflection principle~\cite{SCbook}.

Let $H$ denote the closed upper half of the unit disk and
$\|\cdot\|_H$ the supremum norm on $H$.  In fact, the only
property of $H$ we shall use is that it is a bounded subset of
the upper half-plane.  Our proof is adapted from the argument
for approximation of $|x|$ on $[-1,1]$ given on pp.~211--212
of~\cite{atap}.

\smallskip

\begin{theorem}
Let $\alpha$ be any positive number.  There exist a
constant\/ $A>0$ and type $(n,n)$ rational approximations\/ $r_n$ such
that as $n\to \infty$,
\begin{equation}
\|x^\alpha - r_n \|_H^{} = O(e^{-A \sqrt n}).
\label{a1}
\end{equation}
\end{theorem}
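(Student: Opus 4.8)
The plan is to combine a Stieltjes-type integral representation of $x^\alpha$ with the root-exponential accuracy of the truncated trapezoidal rule applied to an integrand that is analytic in a strip and decays exponentially along it; the exponentially clustered quadrature nodes will play here the role that the points $a^k$ play in Newman's classical construction for $|x|$.

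First I would reduce to the range $0<\alpha<1$. If $\alpha$ is a positive integer, $x^\alpha$ is a polynomial and (\ref{a1}) is trivial. If $\alpha>1$ is not an integer, write $\alpha=m+\beta$ with $m$ a positive integer and $0<\beta<1$; then if $r_n$ is a type $(n,n)$ approximation of $x^\beta$, the function $x^m r_n(x)$ is a type $(n+m,n)\subseteq(n+m,n+m)$ approximation of $x^\alpha$ whose error on $H$ is at most $\|x^m\|_H<\infty$ times that of $r_n$, and relabelling $n$ carries (\ref{a1}) from $x^\beta$ over to $x^\alpha$. So assume $0<\alpha<1$ and start from the identity
\[
x^\alpha=\frac{\sin\pi\alpha}{\pi}\int_0^\infty\frac{x\,t^{\alpha-1}}{t+x}\,dt,
\]
valid on the slit plane $\C\setminus(-\infty,0]$; the hypotheses $\alpha>0$ and $\alpha<1$ are exactly what make the integrand integrable at $t=0$ and at $t=\infty$. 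When $\Im x>0$ the only singularity $t=-x$ lies in the open lower half-plane, so by Cauchy's theorem the ray of integration may be rotated up to the positive imaginary axis — the arcs near $0$ and near $\infty$ contribute nothing, again because $0<\alpha<1$ — and since the rotated integrand has no pole on $H$ and is continuous in $x$ there, the identity persists for every $x\in H$. Substituting $t=ie^s$ turns it into
\[
x^\alpha=\frac{\sin\pi\alpha}{\pi}\,e^{i\pi\alpha/2}\int_{-\infty}^{\infty}\Phi_x(s)\,ds,\qquad \Phi_x(s)=\frac{x\,e^{\alpha s}}{x+ie^{s}}.
\]

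The central step is to show that, \emph{uniformly in $x\in H$}, the function $\Phi_x$ is analytic in a fixed strip about the real axis and decays exponentially along it. A pole of $\Phi_x$ would require $e^s=ix$; since $\Re(ix)=-\Im x\le 0$ for $x\in H$, this forces $\Re e^s\le 0$, hence $|\Im s|\ge\pi/2$, so $\Phi_x$ is analytic in the strip $|\Im s|<\pi/2$ for every $x\in H$. On that strip $ie^s$ lies in the open upper half-plane while $x$ lies in the closed upper half-plane, so $|x+ie^s|\ge\Im(x+ie^s)\ge e^{\Re s}\cos(\Im s)$; combining this with the elementary bound $|x+ie^s|\ge\frac12|x|$, valid when $e^{\Re s}\le\frac12|x|$, gives $|\Phi_x(s)|\le C\,e^{-\gamma|\Re s|}$ on each closed substrip $|\Im s|\le d<\pi/2$, with $\gamma=\min(\alpha,1-\alpha)>0$ and $C=C(d)$ independent of $x$ — the decay as $\Re s\to+\infty$ coming from $\alpha<1$ and that as $\Re s\to-\infty$ from $\alpha>0$.

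Finally I would discretize by the trapezoidal rule. Set $r_n(x)=\frac{\sin\pi\alpha}{\pi}\,e^{i\pi\alpha/2}\,h\sum_{|j|\le(n-1)/2}\Phi_x(jh)$; this uses at most $n$ nodes, and clearing denominators exhibits it as a rational function of type $(n,n)$ with simple poles $-ie^{jh}$ spread along the negative imaginary axis and clustering toward the origin. The aliasing error $\bigl|\int_{\mathbb R}\Phi_x-h\sum_{j\in\mathbb Z}\Phi_x(jh)\bigr|$ is $O(e^{-2\pi d/h})$ by the standard strip (Poisson-summation) estimate, and the truncation tail $h\sum_{|j|>(n-1)/2}|\Phi_x(jh)|$ is $O(e^{-\gamma nh/2})$ by the decay bound, both uniform in $x\in H$; choosing $h=h_n$ of order $n^{-1/2}$ equalizes the two exponents and delivers $\|x^\alpha-r_n\|_H=O(e^{-A\sqrt n})$ for a suitable $A>0$ (in fact for any $A<\pi\sqrt{\gamma/2}$), which is (\ref{a1}). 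The step I expect to be the main obstacle is the uniform analyticity-and-decay estimate of the third paragraph: the \emph{unrotated} integrand has a pole sitting on the contour when $x$ is on the negative real axis, and its strip of analyticity collapses to zero width as $x$ approaches that axis, so the contour rotation — which simultaneously pins the strip at $|\Im s|<\pi/2$ and handles the delicate neighbourhood of $x=0$ — is the device that lets one fixed rational function achieve accuracy $O(e^{-A\sqrt n})$ on all of $H$ at once. Everything after it is a routine application of the exponentially convergent trapezoidal rule.
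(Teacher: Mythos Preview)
Your proof is correct and follows essentially the same route as the paper's: the same Stieltjes-type representation of $x^\alpha$ (the paper writes it as $\int_0^\infty x\,dt/(t^{1/\alpha}+x)$, which becomes your integral under $u=t^{1/\alpha}$), a rotation of the contour to push poles away from the upper half-plane, and then the truncated trapezoidal rule with $h\sim n^{-1/2}$.  The only noteworthy differences are cosmetic: the paper reduces further to $\alpha\in(0,1/2]$ (via $r_n^2\approx x^{2\alpha}$), which yields a somewhat better explicit constant than your $\gamma=\min(\alpha,1-\alpha)$ when $\alpha$ is near~$1$; and your treatment of the uniform-in-$x$ decay as $\Re s\to-\infty$ (splitting on $e^{\Re s}\lessgtr\tfrac12|x|$) is actually more careful than the paper's one-line remark on that point.
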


\smallskip

\begin{proof}
If $r_n(x)\approx x^\alpha$ with $\alpha \in (0,1]$, then $x^k
r_n(x) \approx x^{k+\alpha}$, so without loss of generality we may
assume $\alpha\in (0,1]$.  Moreover, if $r_n(x)\approx x^\alpha$
with $\alpha \in (0,1/2\kern .3pt]$, then $(r_n(x))^2 \approx
x^{2\alpha}$, so without loss of generality we may further assume
$\alpha\in (0,1/2\kern .3pt]$.

We start from the identity
\begin{equation}
x^\alpha  = C \int_0^\infty {x\kern .8pt dt\over t^{1/\alpha } + x}, \quad
C = {\sin(\alpha\pi)\over \alpha\pi},
\label{a2}
\end{equation}
valid for $x\not\in (-\infty,0\kern .3pt]$,
which can be derived via the substitution $u  = t/x^\alpha$ and
the integral $\int_0^\infty du/(u^b + 1) =
\pi\csc(\pi/b)/b$ for $b>1$~\cite[eq.~3.241.2]{grad}.
Since we wish to apply (\ref{a2}) for $x$ in the
closed upper half-plane, we rotate the integration contour to
exclude zeros of the denominator,
leaving the value of the integral unchanged since
the integrand decreases faster than linearly as $|t|\to\infty$.\ \ Specifically,
we make the change of variables $t =
e^{\alpha \pi i/2 + s}$, $dt = e^{\alpha \pi i/2+ s} ds$, where $s$
is real, which converts (\ref{a2}) to
\begin{equation}
x^\alpha  = C \int_{-\infty}^\infty {x\kern .8pt e^{\alpha \pi i/2 + s} ds\over
e^{\pi i/2 + s/\alpha }+x}.
\label{a3}
\end{equation}
Note that the integrand decays exponentially at the rate
$e^{s(1-1/\alpha )}$ as $s\to +\infty$, and this is uniformly
true for all values of $x\in H$ since $H$ is bounded.  As for $s
\to -\infty$, here the integrand decays exponentially at the rate
$e^s$ for each $x\ne 0$, uniformly for all $x$.

To get a rational approximation to $x^\alpha$, we now
approximate (\ref{a3}) by the trapezoidal rule with node
spacing $h>0$:
\begin{equation}
r(x) = h \kern .8pt C \sum_{k = -(n-1)/2}^{(n-1)/2}
{x\kern .8pt e^{\alpha \pi i/2 + kh}\over
e^{\pi i/2 + kh/\alpha } + x}.
\label{a4}
\end{equation}
Here $n$ is a positive even number, and there are $n$ terms in the
sum, so $r(x)$ is a rational function of $x$ of type $(n,n)$.  Its
poles are on the negative imaginary axis,
so $r$ is analytic in the upper half-plane.

As reviewed in~\cite{trap}, the error $|r(x)-x^\alpha|$ in the
trapezoidal rule approximation can be decomposed into two parts.
One part is introduced by terminating the sum at $n<\infty$,
on the order of $e^{-nh/2}$.  (By ``on the order,'' we mean
that the dominant exponential term is correct; there may
be further lower-order algebraic factors.)  The other part
is introduced by the finite step size $h>0$.  According to
Theorem~5.1 of~\cite{trap}, this will be of order $e^{-2\pi
d/h}$, where $d$ is the half-width of the strip of analyticity
of the integrand around the real $s$-axis.  To determine
this half-width, we note that the denominator of (\ref{a4})
will be 0 when $e^{\pi i/2 + s/\alpha }$ is equal to $-x$,
and for $x$ in the upper half-plane, this can only happen when
the argument of $e^{s/\alpha }$, namely the imaginary part of
$s/\alpha $, is at least as large as $\pi/2$ in absolute value.
The half-width of the strip of analyticity is consequently $d =
\pi \alpha /2$, giving an error of $e^{-\alpha \pi^2/h}$.

We now choose $h$ to balance the errors $e^{-nh/2}$ and
$e^{-\alpha \pi^2/h}$.
The balance occurs with $h = \pi \sqrt{2\alpha/n}$, giving a
convergence rate
\begin{equation}
\|r_n(x) - x^\alpha \|_H^{} \lesssim \exp(-\pi\sqrt{\alpha n/2}\kern .8pt).
\label{a5}
\end{equation}
We use the inexact symbol ``$\,\lesssim\,$''
since we have only tracked the exponential
term, not lower-order algebraic factors, but this is enough
to establish (\ref{a1}) for any value $A<\pi\sqrt{\alpha/2}$.
\end{proof}

\smallskip

The constants in our argument are not optimal, and one reason
is that the approximation (\ref{a4}) is valid (nonuniformly)
throughout the upper half-plane, not just in $H$.  We do not
know the optimal constants, which appear to be different from
those given in (\ref{stahl}) for approximation of $|x|^\alpha$
on $[-1,1]$.  Our convergence rate bound slows down to zero
as $\alpha \to 0$, but this is also the case in (\ref{stahl}).

Note that the poles of the approximation (\ref{a4}) cluster
exponentially near $x=0$, with exponentially decreasing
residues, as we shall observe in our numerical experiments;
see in particular Figure~\ref{fig1_closeup}.

Though a single straight-sided corner is the case we have
analyzed, in practice the root-exponential approximation
effect is more general.  If there are $k$ singular corners,
the type $(n,n)$ of the rational function required for a given
accuracy increases only approximately in proportion to~$k$,
and whether the sides are straight or not is immaterial.
For example, the standard branch of $z^{1/2}\log(z)$ is as
easily approximated as that of $z^{1/2}$ for $z\approx 0$
in the upper half-plane.  Application of {\tt aaa} for these
functions gives approximations accurate to 6 digits on $[-1,1]$
with $n = 36$ and $29$, respectively.

\section{Conformal maps of polygons}

\begin{figure}
\begin{center}
\vskip .2in
\includegraphics[scale=.792]{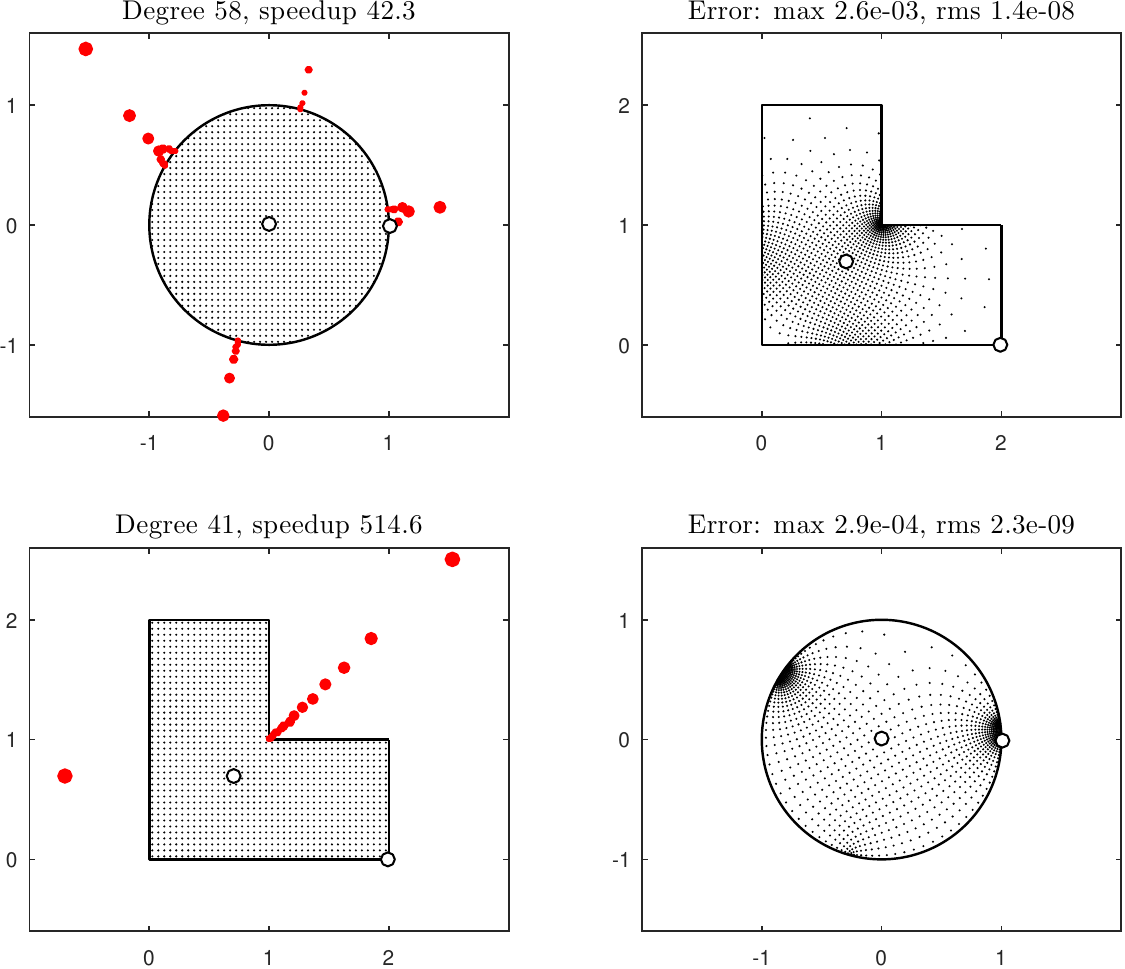}
\end{center}
\caption{\label{fig1_L} Conformal map $f$ of the unit disk onto an
L-shaped region (top row) and the inverse map $f^{-1}$ (bottom row).
The parameters for the map are computed by the SC Toolbox and then
$f$ and $f^{-1}$ are represented compactly to about eight digits of
accuracy (except very near the vertices)
by rational functions $r\approx f$ and $s\approx f^{-1}$
computed by the AAA algorithm.  The white dots show the
normalization of the map by an interior and a boundary
point, and the small black dots in the interior
show\/ about $1000$ points on a regular grid and their images.  The large red dots
show poles of the rational functions sized according to the absolute
values of their residues.}
\end{figure}

Let $\Delta$ be the closed unit disk in the $z$-plane and
$P$ the closed region bounded by a polygon with $k$ vertices
$w_1,\dots , w_k$ in the $w$-plane.  The SC formula represents
a conformal map $f: \Delta \to P$ in terms of an integral with
fractional power singularities at the {\em prevertices} $z_j
= f^{-1}(w_j)$~\cite{SCbook}.  Determining the prevertices
for a given $P$ is a numerical problem known as the {\em SC
parameter problem.}

We illustrate our method by applying it to five SC maps
computed by the Schwarz--Christoffel Toolbox~\cite{toolbox}.
Our first example is a map of the unit disk onto
an L-shaped region, shown in
Figure~\ref{fig1_L}.\footnote{We use the SC Toolbox commands
{\tt w = [2+i 1+i 1+2i 2i 0 2];}
{\tt c = .7+.7i;}
{\tt p = polygon(w);}
{\tt opts = sctool.scmapopt('Tolerance',1e-12);}
{\tt f = diskmap(p,opts);}
{\tt f = center(f,c)}.}
The Toolbox computes the forward and inverse
maps with approximately twelve digits of accuracy almost everywhere
in the domain, except that very close to some of the corners or
their preimages, the accuracy falls to five or six digits (see
Section~\ref{numerics}).\footnote{Following Driscoll's suggestion
(private communication), we have also improved the accuracy of
the inverse map by adding the line {\tt newton = false} after the
command {\tt [ode,newton,tol,maxiter] = sctool.scinvopt(options)}
in the Toolbox file {\tt @diskmap/private/dinvmap.m}.}
Figure~\ref{fig1_L} displays
this conformal map and its inverse and illustrates the
rational approximations that are the subject of this paper.
The plots in the top row show the forward map $f$.
First, the mapping parameters are computed with the SC Toolbox as
above.  Then 3000 equispaced points on the unit circle are collected
in a vector $Z$.  (We take the number of points to be
$500\kern .5pt k$, i.e., 500 times the
number of vertices; see Section~\ref{numerics}.)
The vector of images $F = f(Z)$ is computed
with the SC Toolbox.  We then execute the Chebfun commands
\begin{verbatim}

    [r,rpol,rres] = aaa(F,Z,'tol',1e-7,'mmax',200)
    [s,spol,sres] = aaa(Z,F,'tol',1e-7,'mmax',200)
\end{verbatim}

\noindent to determine forward and inverse rational functions
$r$ and $s$ such that $F \approx r(Z)$ and $Z \approx s(F)$;
these computations take less than a second.  The poles of $r$
and~$s$ are returned in the vectors {\tt rpol} and {\tt spol},
computed from the AAA barycentric representation by solving a
generalized eigenvalue problem~\cite{AAA}, and the corresponding
residues are returned in {\tt rres} and {\tt sres}.

To make the plots, we next apply $r$ to map a grid of points in
$\Delta$ to their images in $P$, which takes about a millisecond.
As usual with conformal maps, there are exponential distortions
involved (see the next section), leading to a very uneven
distribution of image dots.  The second row of the figure
shows the inverse map $f^{-1}$, including a grid of points
in $P$ and their equally unevenly distributed (pre)images in
$\Delta$ computed with $s$, again requiring about a millisecond.
The plots also show a white dot in the interior and another
on the boundary of each region, together with their images,
to complete the specification of the conformal mapping problem.

\begin{figure}
\begin{center}
\vskip .2in
\includegraphics[scale=.792]{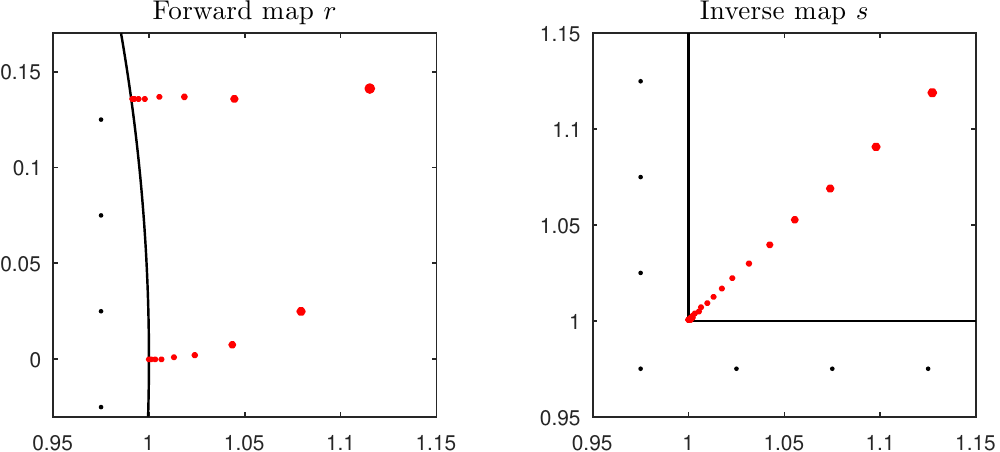}
\end{center}
\caption{\label{fig1_closeup} Closeups of Figure\/~$\ref{fig1_L}$.
The poles approach the singularities with exponentially decreasing
spacing and residues, as one would expect based on Theorem~1.}
\end{figure}

Most striking in Figure~\ref{fig1_L} are the red dots
representing poles of $r$ (top left) and~$s$ (bottom left).
These are sized to indicate the magnitude of each residue, with
a logarithmic spacing from the smallest diameter for magnitude
$10^{-3}$ or less to the largest for magnitude $1$ or more.
In the upper-left plot, we see chains of poles of\/~$r$ with
diminishing residues approaching all six prevertices (two pairs
of which are quite close together).  This is a familiar effect in
rational approximation~\cite{stahl,suetin}, and it occurs because
all six points correspond to branch point singularities of $f$,
with local behavior of type $(z-z_j)^{1/2}$ for the five salient
corners and $(z-z_j)^{3/2}$ at the reentrant corner.  The degree
58 marked in the title indicates that $r$ is a rational function
of type $(58,58)$, that is, with 58 poles (some of which are not
in the picture).  The lower-left plot, on the other hand, shows
poles only approaching the reentrant corner for the inverse map.
This is because the other corners have local behavior of type
$(w-w_j)^2$, which is not singular.\footnote{One can prove
that $f^{-1}$ is analytic at a right-angle salient corner by
analytically continuing the conformal map around the vertex
with four applications of the Schwarz reflection principle.
Such an argument shows that in general, a corner of a polygon
is a nonsingular point of the inverse conformal map if and only
if the interior angle is $\pi$ divided by an integer.  This is
essentially the same as the observation about sharpness just
after (\ref{stahl}).}  Figure~\ref{fig1_closeup} shows closeups from
each of these figures.

The ``speedup'' numbers in the titles give estimates of how
much faster AAA is for evaluating each map than the SC Toolbox.
To compute these numbers, timings for mapping the grid of
points are compared with timings for the corresponding
SC Toolbox command.  Throughout our explorations, we have found
that the rational functions are typically 10--1000 times faster
than the Toolbox.  The speedup numbers in our plots are only
approximate; they vary from one run to the next.

\begin{figure}
\begin{center}
\vskip .2in
\includegraphics[scale=.792]{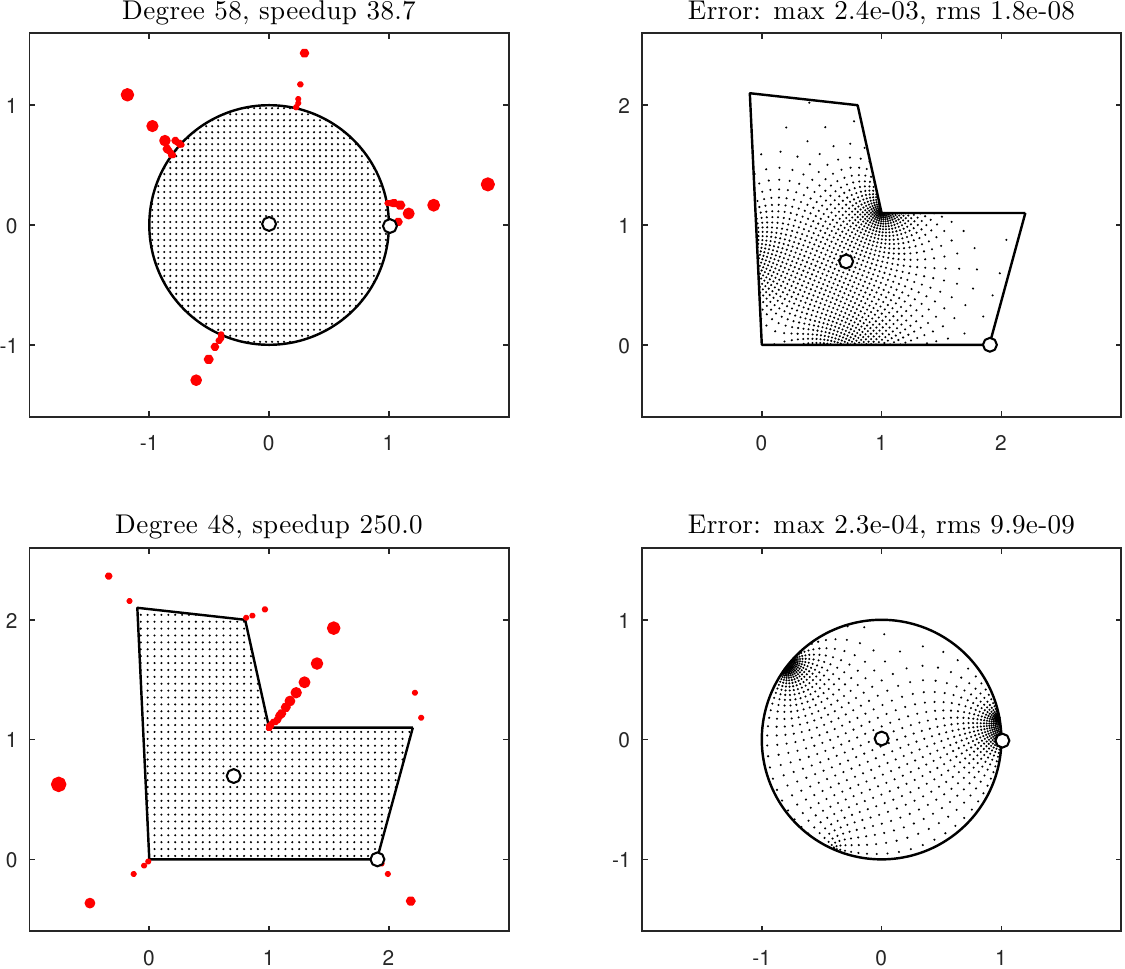}
\end{center}
\caption{\label{fig2_Lpert} A perturbed L-shaped region.  All the
corners are now singular, and\/ $r$ and\/ $s$ have poles near
all six prevertices/vertices.  However, some singularities are much
stronger than others.}
\end{figure}

Finally there are the errors listed in the titles.  In the
upper-right plot the first of these numbers, labeled `max',
is determined by sampling the boundary on a grid four times
finer than the sample set and computing the maximum of $|f(z) -
r(z)|$ at these points, with $f$ calculated by the SC Toolbox.
Since $f$ and $r$ are both analytic in $\Delta$, the maximum
modulus principle implies that this can be counted on as
close to a true maximum error in the rational representation.
These numbers are generally disappointing, but this reflects
diminished accuracy only in extremely small regions close to
the vertices, as we shall discuss in Section~\ref{numerics}
(see in particular Figure~\ref{contplot}).\ \ Elsewhere, the
accuracy is excellent, and this is reflected in the ``rms''
errors, which show root-mean-square errors in the grid of points.
Note that the rms errors are much smaller than $1/\sqrt{1000}$
times the max errors, implying that none of the ${\approx}\kern
1pt 1000$ grid points have errors even close to maximal.

\begin{figure}
\begin{center}
\vskip .2in
\includegraphics[scale=.792]{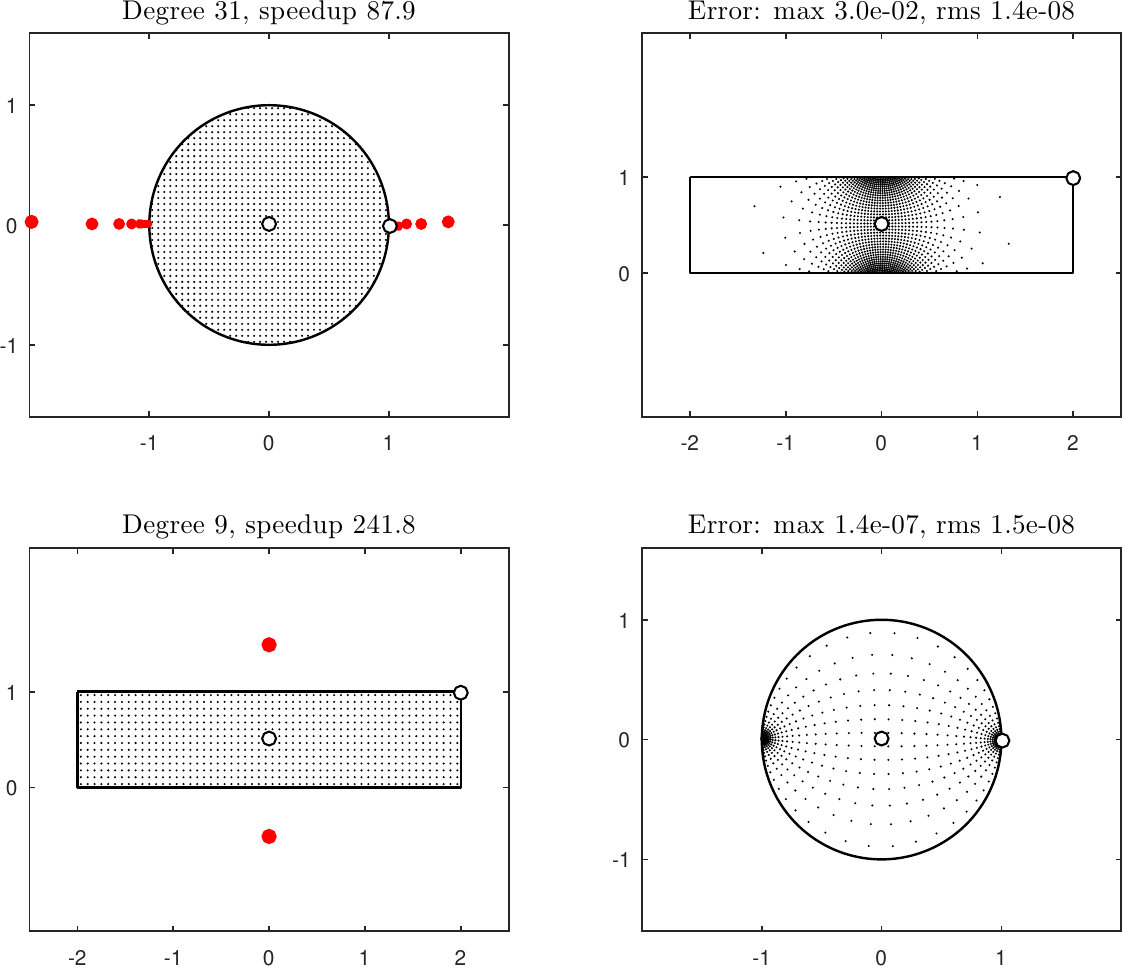}
\end{center}
\caption{\label{rectangle} Conformal map of a $4$-to-$1$ rectangle, 
an example we shall discuss further in the next section.  Here
$f^{-1}$ is an elliptic function, analytic in
a neighborhood of $P$.  On the unit circle, the pairs of
prevertices are too close to distinguish by eye.}
\end{figure}

The lower-right plot of Figure~\ref{fig1_L}, for the inverse map,
is labeled in the same manner, now with the maximum error based
on the maximum of $|f^{-1}(w) - s(w)|$, again on a boundary grid
four times finer and with $f^{-1}$ computed by the SC Toolbox.

We mentioned that the five salient right angles of the L-shaped
region are nonsingular.  Figure~\ref{fig2_Lpert} highlights
this effect by repeating Figure~\ref{fig1_L} with the vertices
perturbed.  Now all the corners are singular, and poles of $s$
appear near all of them.  At the same time, the widely varying
numbers and sizes of the dots illustrate how much more important
some singularities are than others.  The rational function $s$
has just three poles within a distance $1/2$ of the singular
vertex near $1+2\kern .5pt i$, for example, with residues of
absolute value less than $10^{-3}$.  By contrast there are 23
poles within the same distance of the reentrant corner near
$1+i$, their distances decreasing from $O(1)$ to $O(10^{-4})$
as the absolute values of the residues decrease from $O(10^{-1})$
to $O(10^{-7})$.

At the other extreme, Figure~\ref{rectangle} shows the conformal
map onto a rectangle.  Here the inverse map has no singular
corners, and indeed it is a doubly periodic elliptic function (as
follows from the Schwarz reflection principle).  In this case, no
poles approach the boundary of $P$, and the associated rational
function has degree just~$9$.  The two poles visible on these
axes lie at about $1.5\kern .5pt i + 10^{-7}(4+4\kern .5pt i)$
and $-0.5\kern .5pt i + 10^{-7}(3+2\kern .5pt i)$, very close
to the positions $1.5\kern .5pt i$ and $-0.5\kern .5pt i$ of the
innermost poles of the elliptic function.  Their residues match
the theoretical value $-2/\pi$ to about three digits.  In this
as in most rational approximations, one should bear in mind
that $r$ or $s$ will match poles or residues of $f$ or $f^{-1}$
only ``as a means to the end'' of approximating $f$ or $f^{-1}$
itself at the prescribed sample points, so not too much should be
read into the number of digits of agreement.  For example, this
elliptic function has infinitely many poles in the plane, and
obviously $s$ is not going to approximate all of them.  One may
also note in the upper-left image of Figure~\ref{rectangle},
as in several of our other figures, that the AAA algorithm does
not respect symmetry of approximation problems.  One might have
expected the poles of $\kern 1pt r$ to be real, or at least
to fall in complex conjugate pairs, but the algorithm does not
impose this condition.  (As mentioned in~\cite{AAA}, one could
develop variant algorithms with this property.)

\begin{figure}
\begin{center}
\vskip .2in
\includegraphics[scale=.792]{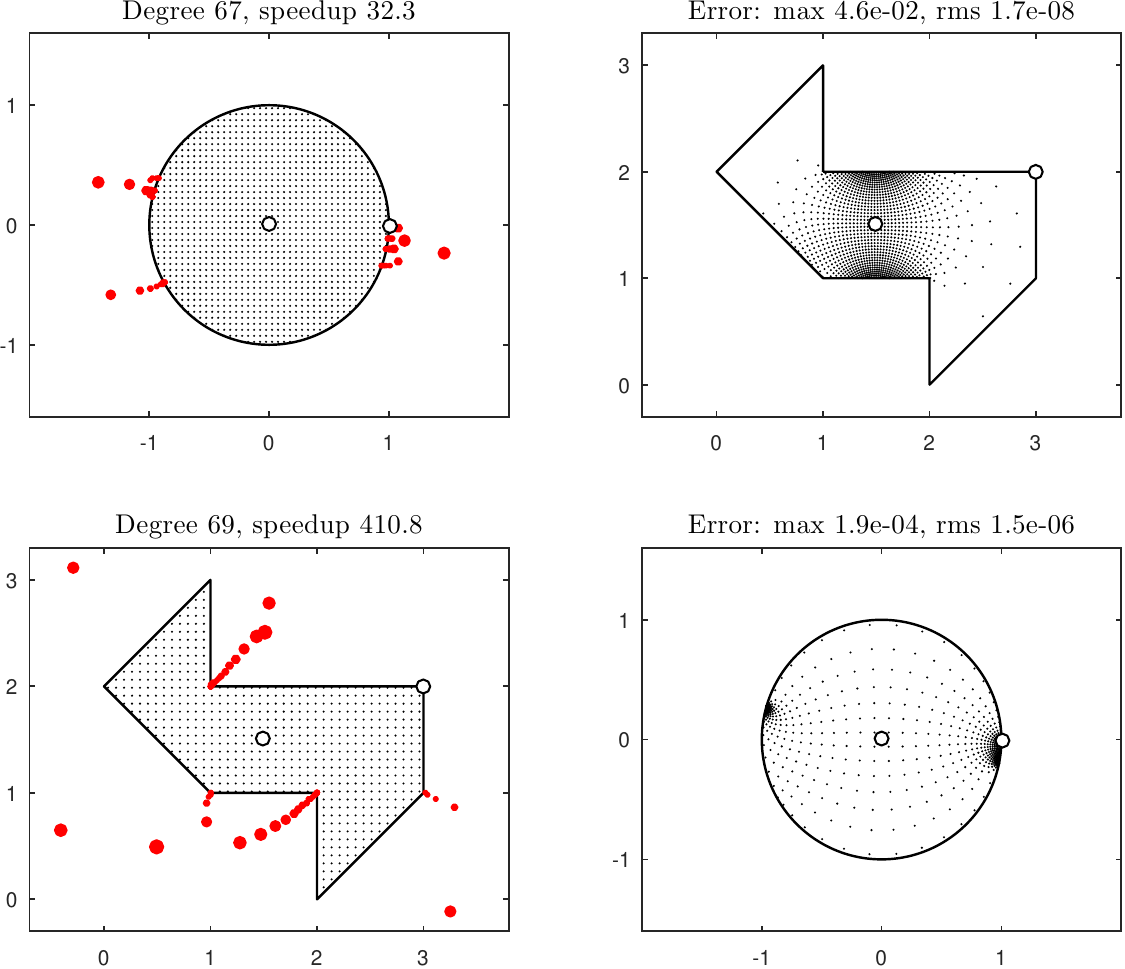}
\end{center}
\caption{\label{fig4_iso} Conformal map of an isospectral drum from
Gordon, Webb, and Wolpert\/~{\rm \cite{gww}}.}
\end{figure}

Figure~\ref{fig4_iso} shows one of the isospectral
drums made famous by Gordon, Webb, and Wolpert~\cite{gww}.
Again it is clear from the pole distribution
that there are particularly strong singularities at the reentrant corners.

\begin{figure}
\begin{center}
\vskip .2in
\includegraphics[scale=.792]{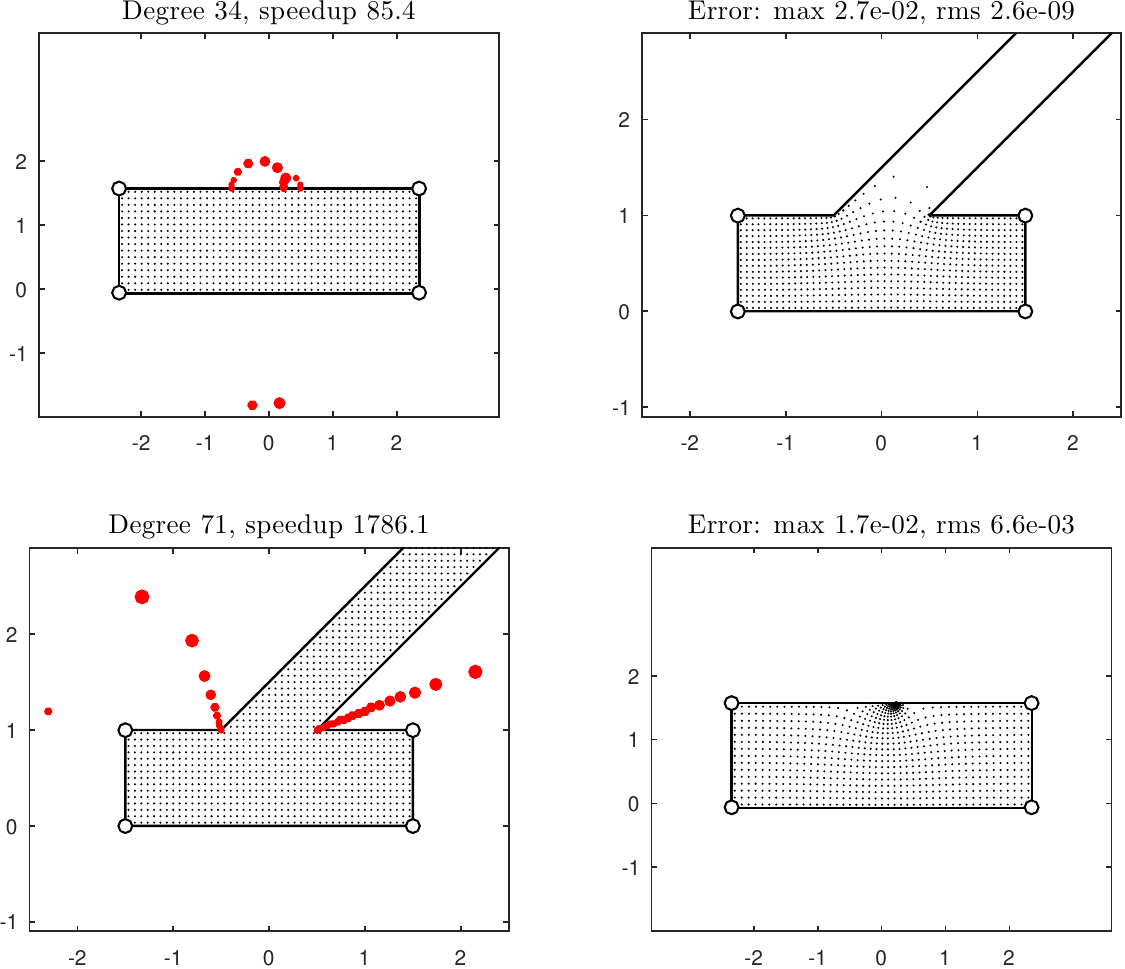}
\end{center}
\caption{\label{rmap} Map of a rectangle onto an infinite polygon,
illustrating that rational approximations are not limited to bounded
regions.  The black dots in the second row were computed as a grid of
points in the portion of the polygon with
imaginary part ${\le}\kern 1pt 3$, though the figure would look the
same if such points could be included all the way out to $\infty$.}
\end{figure}

These four examples involve maps of the unit disk to a polygon,
but there are other possibilities.  For example, variants of
the SC formula are implemented in the SC Toolbox for mapping
to a polygon from a half-plane, a rectangle, or an infinite
strip~\cite{toolbox,SCbook}.  Moreover, the domains need not be
bounded, and if they are unbounded, rational functions can still
be used to represent the conformal maps.  Figure~\ref{rmap}
illustrates this with images for the map of a rectangle to an
infinite polygon $P$ consisting of another rectangle along one
side of which a channel extending to $\infty$ has been attached
at a $45^\circ$ angle.  The polygon $P$ is first prescribed;
then a rectangle in the $z$-plane is found with the uniquely
determined aspect ratio such that its four corners can map to
the four right-angle corners of $P$.

\section{Exponential distortions with analytic boundaries}
We now turn to smooth domains, specifically, domains bounded by
analytic Jordan curves.   If $f$ is a conformal map of the unit
disk onto such a domain, then $f$ can be analytically continued
to a neighborhood of the closed unit disk.  This implies that $f$
can be approximated by degree $n$ polynomials with exponential
convergence, that is, errors diminishing at a rate $O(\kern .5pt
\rho^{-n})$ as $n\to \infty$ for some $\rho > 1$.  This might
seem to suggest that approximation by rational functions should
not be needed.

In fact, that conclusion would be mistaken, for the constant
$\rho$ is likely to be exponentially close to $1$.  The aim of
this section is to establish some theorems in this direction,
whose root is the ``crowding phenomenon'' of conformal
mapping~\cite{banjai,dp,papstyl,sc}.  The crowding effect can be
seen in all the plots of conformal maps we have presented, where in every
case, a regular array of points in one domain maps to a set of
points with exponentially varying densities in the other. 
In Figure~\ref{rectangle}, for example, the distortion is
extreme enough that as far as one can tell from the clusters
of poles, four prevertices on the unit circle appear to be just two.
Further examples will be given in Section~5.

Let $f$ be a conformal map of the unit disk $D$ in the $z$-plane
onto a region $\Omega$ in the $w$-plane whose boundary is an
analytic Jordan curve $\Gamma$.\ \ By the Osgood--Carath\'eodory
theorem, $f$ extends continuously to $\Gamma$~\cite{henrici}.
We say that $\Omega$ {\em contains a finger of length $L>0$}
if there is a rectangular channel of width~$1$ defined by a pair
of parallel line segments of length $L$, disjoint from $\Omega$,
such that $\Omega$ extends all the way through the channel with
parts of $\Omega$ lying outside both ends.  Specifically, we
consider a point $a\in\Omega$ lying outside the channel at one
end and the nonempty portion $\Gex$ of $\Gamma$ lying outside
the channel at the other end, which we call the ``exit arc,''
as shown in Figure~\ref{finger}.  We denote the ends of the
channel by~$A$ and~$B$.  (If a part of $\Omega$ extends beyond
$B$ and then bends around to cross $B$ again and reenter the
rectangle, it is still ``beyond $B$'' in a topological sense
and this is how we interpret it and its portion of the
boundary $\Gamma$ throughout the discussion below.)

\begin{figure}[h]
\begin{center}
\vskip .2in
\includegraphics[scale=.75]{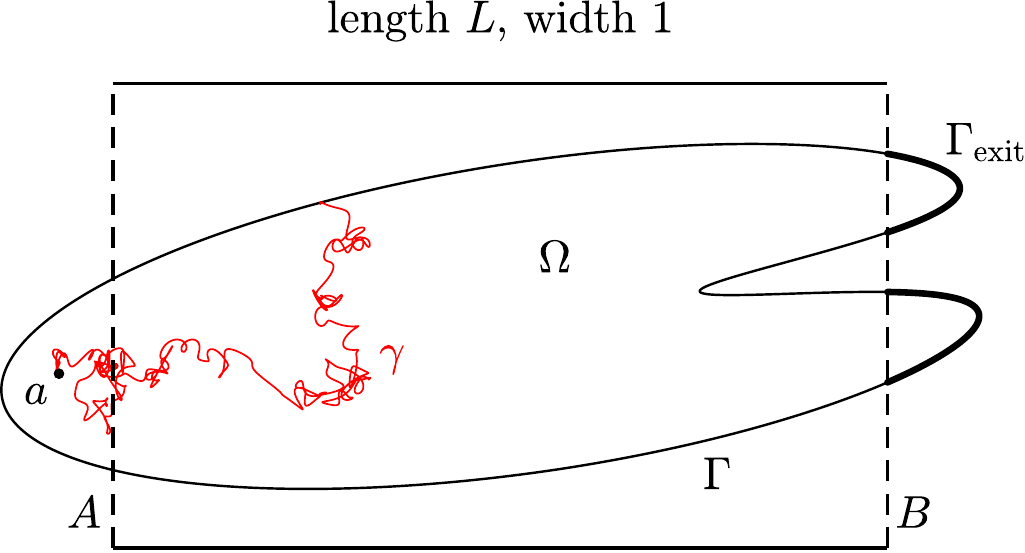}
\end{center}
\caption{\label{finger}A region $\Omega = f(D)$
with a {\em finger of length $L$} and a
Brownian path $\gamma$ starting at $a$ and ending when it hits
the boundary $\Gamma$.  The probability of
a path getting all the way through the
finger to first hit the boundary at a point beyond it
shrinks exponentially with $L$ at the rate 
$\exp(-\pi L)$.  This provides one explanation of the
exponential distortions characteristic of conformal maps, which
render rational approximations much more efficient
than polynomials.}
\end{figure}

Let $\omega\in (0,1)$ denote the harmonic measure of $\Gex$
at $a$ with respect to $\Omega$~\cite{ahl,gm,mp}.  This can be
interpreted as the probability that a Brownian path in $\Omega$
starting at $a$ first hits $\Gamma$ somewhere along $\Gex$,
and it is equal to the value $u(a)$ of the harmonic function
$u$ in $\Omega$ that takes the boundary values $1$ along
$\Gex$ and $0$ elsewhere on $\Gamma$.  Our first estimate
relates the geometry of $\Omega$ to the size of~$\omega$.
According to Garnett and Marshell~\cite[chap.~IV]{gm}, bounds
of this kind go back to Ahlfors in 1930~\cite{ahl30}, and in the conformal
mapping literature, related results have been derived by
authors including de Lillo, Dubiner, Pfaltzgraff, Pommerenke,
Wegmann, and Zemach~\cite{delillo,dp,pfluger,pomm,wegmann};
a survey is given in section~3 of~\cite{delillo}.  The most
general treatments rely on the notions of {\em extremal length}
and {\em extremal distance} from the theory of conformal
invariants~\cite{ahl,gm}, but our aim here is to state results
as simple as possible that contain the key factor $e^{-\pi L}$.
The following result is close to Theorem~6.1 of~\cite{gm}, which
has a constant $C=16$ and certain other differences.  The reason
for writing the bound in terms of the quotient $C/2\pi$ is that
it simplifies the formulas of the subsequent three theorems.

\begin{theorem}
Let\/ $\Omega$ contain a finger of length $L>1$, and let
$a\in\Omega$ be a point outside the finger on one end.  The
harmonic measure at\/ $a$ of the portion $\Gex$ of\/ $\Gamma$ beyond
the other end satisfies
\begin{equation} 
\omega < (C/2\pi)d\kern .6pt e^{-\pi L},
\label{fprimebound}
\end{equation} 
where $C$ is a constant independent of\/ $\Omega$ and $L$,
and $d$ is the length of the segment(s) $B\cap \Omega$.
Numerical computations indicate that a suitable
value is $C \approx 14.7$.
\end{theorem}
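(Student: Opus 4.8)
The plan is to read $\omega$ as a harmonic measure and to argue that a Brownian path from $a$ can reach $\Gex$ only by first threading the width-$1$ channel, which costs a factor $e^{-\pi L}$, and then escaping through the exit cross-section $B\cap\Omega$, which costs a further factor proportional to its length $d$. Let $u$ be the bounded harmonic function on $\Omega$ with boundary values $1$ on $\Gex$ and $0$ on $\Gamma\setminus\Gex$, so that $\omega=u(a)$. Normalize coordinates so that the channel is the rectangle $(0,L)\times(0,1)$, its two walls (which lie outside $\Omega$) are the segments on $\Im z=0$ and $\Im z=1$, $\Re a\le 0$, and $\Gex\subset\{\Re z>L\}$ — a part of $\Omega$ that bends back through $B$ being counted as lying beyond $B$ in the topological sense already stipulated.

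First I would discard everything beyond the channel. Let $U$ be the connected component of $\Omega\cap\{\Re z<L\}$ containing $a$, and put $\sigma_B=\partial U\cap\{\Re z=L\}$, a crosscut or finite union of crosscuts contained in $B\cap\Omega$ and hence of total length at most $d$. On $U$ the function $u$ is harmonic, vanishes on $\Gamma\cap\partial U$ (which is disjoint from $\Gex$), and is at most $1$ on $\sigma_B$; so by the maximum principle $u\le\omega_U(\sigma_B,\cdot)$ on $U$, writing $\omega_V(E,z)$ for the harmonic measure of $E\subseteq\partial V$ at $z\in V$. Thus $\omega=u(a)\le\omega_U(\sigma_B,a)$, and it remains to bound this by $(C/2\pi)\,d\,e^{-\pi L}$.

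Next I would split the channel into its last unit of length and the rest. Since $L>1$, a Brownian path from $a$ that hits $\sigma_B$ before $\partial U$ must first cross the line $\ell=\{\Re z=L-1\}$, so the strong Markov property gives $\omega_U(\sigma_B,a)\le p\cdot\sup_{z_0\in\ell\cap U}\omega_U(\sigma_B,z_0)$, where $p$ is the probability that Brownian motion from $a$ reaches $\ell$ before $\Gamma$. Each such path stays in $U\subseteq\{\Re z<L\}$ until it meets $\sigma_B$, so monotonicity of harmonic measure together with the explicit half-plane Poisson kernel — at a point of $\ell$, which is at distance $1$ from the line $\{\Re z=L\}$, that kernel is bounded by $1/\pi$ — gives $\omega_U(\sigma_B,z_0)\le d/\pi$. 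And $p$ is the probability of traversing the sub-channel $(0,L-1)\times(0,1)$, a region of width at most $1$ and length $L-1$ along whose sides the walls are absorbing; this is precisely what the Ahlfors distortion theorem controls (equivalently, a comparison with a barrier built from the harmonic function $e^{\pi\Re z}\sin(\pi\Im z)$, corrected locally at the two corners where $\ell\cap U$ abuts the walls), yielding $p\le C_0\,e^{-\pi(L-1)}$ for an absolute constant $C_0$. Combining, $\omega\le(C_0/\pi)\,d\,e^{-\pi(L-1)}=(C/2\pi)\,d\,e^{-\pi L}$ with $C=2C_0e^{\pi}$; since nothing here is optimized, the sharper empirical value $C\approx14.7$ is left to the numerical experiment.

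The one place real analysis enters is the channel bound $p\le C_0\,e^{-\pi(L-1)}$ — how little harmonic measure can leak through a long, thin channel whose width is at most $1$ but otherwise unconstrained and whose walls absorb. This is the classical Ahlfors distortion phenomenon, and I expect it to be the main obstacle; the cleanest packaging is via extremal length as in \cite[Ch.~IV]{gm}, where one notes that the extremal distance in $U$ from a crosscut near $a$ to the segment $\sigma_B$ is at least $(L-1)+\pi^{-1}\log(1/d)-O(1)$ — the channel contributing at least $L-1$ and the funnelling into a segment of length $d$ contributing at least $\pi^{-1}\log(1/d)-O(1)$ — and then invokes the standard harmonic-measure/extremal-distance inequality, which delivers the bound with the factor $d$ already built in. The remaining steps are bookkeeping with the maximum principle and the strong Markov property.
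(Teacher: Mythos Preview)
Your argument is essentially correct and takes a genuinely different route from the paper's. The paper argues by a chain of monotonicity steps $\omega\le\omega_1\le\cdots\le\omega_4$: it truncates the protrusion beyond $B$ so that $\Gex$ becomes a subset of $B$, absorbs the intervening boundary arcs into $B$, enlarges $\Omega$ all the way to the exterior of the three-sided open rectangle formed by the two walls together with $B$, and finally slides $a$ to the optimal point on the entry segment $A$. This reduces everything to the harmonic-measure density at the inner closed end of one canonical slit domain, whose $e^{-\pi L}$ decay is classical and whose constant is then read off a numerical Schwarz--Christoffel computation. You instead factor the estimate analytically: truncate at $B$, split at $\ell=\{\Re z=L-1\}$ via the strong Markov property, extract the factor $d$ from the half-plane Poisson kernel at distance~$1$, and extract the factor $e^{-\pi L}$ from an Ahlfors-distortion (or barrier, or extremal-length) bound on the probability $p$ of threading the channel. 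Your decomposition isolates the two mechanisms cleanly and needs no numerics; the paper's reduction is more geometric and delivers the specific value $C\approx 14.7$.

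One point deserves care. The paper's definition of a finger only requires the two wall \emph{segments} to be disjoint from $\Omega$; it does not forbid $\Omega$ from also occupying territory alongside the channel, outside the walls, within the strip $0<\Re z<L$. In such a wrapping configuration your component $U$ can meet $\{\Re z=L\}$ outside $B$, so $\sigma_B\subseteq B\cap\Omega$ may fail, and Brownian motion in $U$ may reach $\ell$ without passing through the narrow channel at all, invalidating the bound $p\le C_0e^{-\pi(L-1)}$ as stated. The paper's third monotonicity step---enlarging to the exterior of the three-sided rectangle, so that $\Gex$ sits on the \emph{inner} face of $B$ and is accessible only through the channel---is precisely what neutralises this. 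Your argument is easily repaired in the same spirit: restrict $\sigma_B$ to $B\cap\Omega$ from the outset (this still dominates $u$ on $\partial U$ since $\Gex$ lies topologically beyond $B$), and observe that any path in $U$ from $a$ to $\sigma_B\subseteq B$ must make a final entry into the channel through $A$ and then traverse it, so the Markov split should be taken at $A\cap\Omega$ (or at $\ell\cap((0,1)\times\{L-1\})$) rather than at the full line $\ell$.
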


\begin{proof}
We argue by a sequence of inequalities $\omega \le  \omega_1
\le \omega_2 \le \omega_3 \le \omega_4$, followed by further
inequalities to be explained, with a Schwarz--Christoffel
conformal map at the end.  The sequence is summarized in
Figure~\ref{figseq}.  In each image, the point marked on the
left is $a$ and the thickly marked part of the boundary on the
right is $\Gex$.  From step to step, we sometimes adjust $a$
or $\Gex$.)

\begin{figure}
\vskip .3in
\begin{center}
\includegraphics[scale=.84]{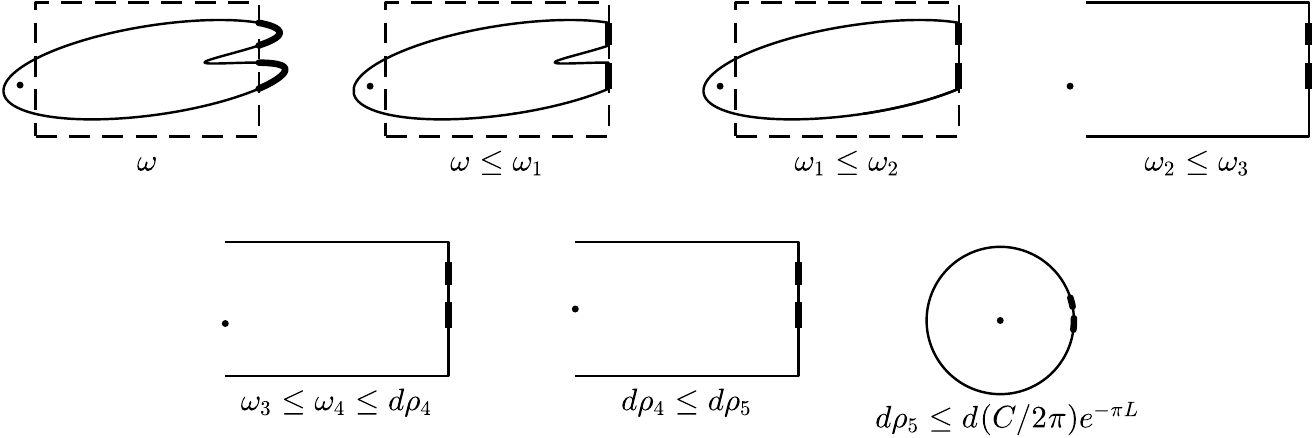}
\end{center}
\caption{\label{figseq}Sequence of steps for the proof
of Theorem 2, as explained in the text.
In each diagram $\omega_j$ denotes the
harmonic measure at the point $a$ on the left
of the exit arc $\Gex$ on the right with respect to the indicated
domain.}
\end{figure}

For the first step, from $\omega$ to $\omega_1$, we truncate
any portions of the finger that protrude beyond $B$, so that
$\Gex$ is now a subset of $B$.  By a principle of monotonicity of
harmonic measure, this gives $\omega\le \omega_1$.  Intuitively,
any Brownian path that reaches the boundary arcs of the first
image must have passed through one of the boundary arcs of
the second; thus the latter event must be at least as likely.
Note that the length of $\Gex$ is now exactly $d$.

Second, we expand any arcs of $\Gamma$ lying between arcs
of $\Gex$ to the boundary segment $B$, giving $\omega_1\le
\omega_2$ again by monotonicity.  Intuitively, enlarging the
domain means that more Brownian paths are available to reach
$\Gex$ before first hitting the boundary somewhere else.

Third, we expand $\Omega$ to be the whole exterior of the
three-sided open rectangle (a doubly-connected region in the
plane), giving $\omega_2 \le \omega_3$ by monotonicity once more.
From here on, the domain boundary consists of six segments,
three on the inside of the rectangle and three on the outside.
(Logically, this step could have been combined with the last one,
but for clarity it seems helpful to describe them separately.)

Fourth, any path from $a$ to the exit arc must have a point at
which it first touches the entry segment $A$ of the rectangle.
Some such points will correspond to greater probabilities than
others of eventually reaching $\Gex$.  We adjust $a$ to be a
point on $A$ that is optimal in this respect, giving $\omega_3
\le \omega_4$.

We have now reached the problem of estimating the harmonic
measure $\omega_4$ of a given segment or set of segments of
total length $d$ on the inside edge $B$ of the closed end of
an $L\times 1$ open-ended rectangle at a certain point $a$
on the open end.  Now at each point along $B$, there will be a
{\em harmonic measure density} for the point $a$, a continuous
function along $B$ taking positive values in the interior of $B$
and going to zero at the endpoints because of the corners.
Let $\rho_4$ denote the maximum value of this density.
Since $\Gex$ has length $d$, we have $\omega_4 \le d \rho_4$.

We next note that the number $\rho_4$ depends only on $L$
(which is fixed) and the point $a$ along the entry segment $A$.
Its maximal value for a given $L$ will occur when $a$ is at the
center of $A$, and it will be attained at the midpoint of $B$,
which we may call $b$.  We denote the value of the harmonic
measure density for this choice of $a$ by $\rho_5$, implying
$d\rho_4 \le d\rho_5$.

At this point we have a fully prescribed geometry,
an open-ended $L\times 1$ rectangle for which we
want the harmonic measure density $\rho_5$ of the
midpoint~$b$ of the inner closed end at the midpoint~$a$
of the open end.
This number is conformally invariant, so we can determine it by 
a Schwarz-Christoffel conformal map $g$ of the unit disk to the exterior
of the six-segment open rectangle boundary.  Specifically,
if $g(0) = a$ and $g(1) = b$,
then $\rho_5 = (\twopi |g'(1)|)^{-1}$.
As $L\to\infty$, it is well known that $\rho_5$
will decrease at a rate proportional to $e^{-\pi L}$  for a map with
a channel of this kind (see any of the references listed above
in connection with the crowding phenomenon, or for a beautiful
analysis of a special case, chapter 10 of~\cite{challenge}).
This establishes the theorem for some constant~$C$, and we have
obtained the estimate $C\approx 14.7$ by numerical computations
with the Schwarz--Christoffel Toolbox~\cite{toolbox}.
\end{proof}

Our next result asserts that if $\Omega$ contains a slender
finger, then $|f'(z)|$ must be exponentially large for some $z$
on the unit circle.

\begin{theorem}
Let\/ $\Omega = f(D)$ contain a finger of length $L>1$, and let
$a=f(0)$ be a point outside the finger on one end. 
Then there is a point $z$ with $|z|=1$
for which $|f'(z)| > C^{-1} e^{\pi L}$,
where $C$ is the same constant as in Theorem~$2$.
\end{theorem}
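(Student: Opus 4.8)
The plan is to deduce Theorem~3 from Theorem~2 by invoking the conformal invariance of harmonic measure. First I would pull the exit arc $\Gex$ back to the unit circle: since $f:D\to\Omega$ is a conformal map with $f(0)=a$ which, by Osgood--Carath\'eodory, extends to a homeomorphism of $\partial D$ onto $\Gamma$, the set $E=f^{-1}(\Gex)$ is a well-defined, nonempty, relatively open subset of $\partial D$. Harmonic measure is a conformal invariant, and at the centre of the disk it coincides with normalized arc length, so the harmonic measure $\omega$ of $\Gex$ at $a$ equals $|E|/\twopi$, where $|E|$ denotes the arc length (angular measure) of $E$. In particular $|E|=\twopi\,\omega$, and Theorem~2 gives $\twopi\,\omega < C\kern1pt d\kern1pt e^{-\pi L}$; note also that $\omega>0$ by the strong maximum principle, so $|E|>0$ and the averaging argument below is legitimate.

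Next I would relate $|E|$ to $|f'|$ on the circle. Because $\Gamma$ is analytic, $f$ continues analytically across $\partial D$, so $f'$ is continuous on the compact set $\partial D$ and attains its maximum there. The length $|\Gex|$ of the image arc satisfies $|\Gex|=\int_E|f'(z)|\,|dz|$, so by averaging (pigeonhole) there is a point $z$ with $|z|=1$ and $|f'(z)|\ge |\Gex|/|E|$. The remaining ingredient is the elementary geometric bound $|\Gex|\ge d$, where $d$ is the total length of the segment(s) $B\cap\Omega$. Here I would project $\Gex$ orthogonally onto the line containing $B$: this map is $1$-Lipschitz, hence does not increase length, and I claim its image covers the interior of $B\cap\Omega$. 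Indeed, if $p$ is an interior point of a subsegment of $B\cap\Omega$, the ray from $p$ perpendicular to $B$ and pointing away from the channel starts inside the bounded region $\Omega$, hence must leave $\Omega$ and therefore first meets $\Gamma$ at a point lying strictly beyond $B$, i.e.\ a point of $\Gex$ with the same projection as $p$; the bending-around cases are handled topologically exactly as in the parenthetical remark preceding Theorem~2.

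Combining the three steps, $|f'(z)|\ge |\Gex|/|E|\ge d/(\twopi\,\omega) > d/(C\kern1pt d\kern1pt e^{-\pi L}) = C^{-1}e^{\pi L}$, which is the assertion. Essentially everything is a direct translation between $D$ and $\Omega$; the one step requiring a little care is the geometric lemma $|\Gex|\ge d$ together with its topological fine print about portions of $\Omega$ that leave and re-enter the channel, and I expect that to be the main (minor) obstacle.
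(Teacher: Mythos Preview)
Your argument is correct and is essentially the paper's own proof: both use Theorem~2 together with the geometric bound $|\Gex|\ge d$ and an averaging step, the only cosmetic difference being that you average $|f'|$ over the preimage arc $E\subset\partial D$ while the paper averages the harmonic measure density over $\Gex\subset\Gamma$, which are dual formulations yielding the identical bound $|f'(z)|\ge|\Gex|/(2\pi\omega)>C^{-1}e^{\pi L}$. Your extra justification of $|\Gex|\ge d$ via orthogonal projection is more than the paper supplies.
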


\begin{proof}
Under the assumptions of Theorem~2, the arc length of
$\Gex$ is at least~$d$ and its harmonic measure at $a$ is
$<(C/2\pi)d\kern .6pt e^{-\pi L}$.  Dividing these quantities,
we see that the average harmonic measure density along $\Gex$ is
$< (C/2\pi) e^{-\pi L}$.  Thus there are points along
$\Gex$ for which the harmonic measure density is
$< (C/2\pi) e^{-\pi L}$.  If $z$ is the preimage of such a
point under $f$, then by conformal invariance,
$|f'(z)| > C^{-1} e^{\pi L}$, as required.
\end{proof}

Theorems 2 and 3 have identified the crucial exponential factor $e^{\pi
L}$.  We now apply this result to derive two consequences with
implications for numerical approximation of conformal maps.

Define the {\em radius of univalence} of $f$, denoted by $r$,
as the supremum of all radii of open disks about $z=0$ to which
$f$ can be continued to a univalent (i.e., one-to-one) analytic
function.  Our first consequence of Theorems~2 and~3 asserts that
if $f$ has a finger of length $L$, then $r$ can be no bigger than
$1 + O(e^{-\pi L})$.  In typical cases, what is going on here is
that if $\Omega$ contains a finger, then $f$ has singularities
exponentially close to the unit circle.  The precise situation
is that $f$ need not actually have singularities (after all,
$f$ can be approximated by polynomials), but at least it must
lose univalence.

\begin{theorem}
Let\/ $\Omega = f(D)$ contain a finger of length $L>1$, let
$a=f(0)$ be a point outside the finger on one end,
and set $R = \sup_{w\in\Gamma} |w-f(0)|$.
Then the radius of univalence of\/ $f$ satisfies
\begin{equation}
r < 1 + 4 R \kern .4pt Ce^{-\pi L},
\label{rbound}
\end{equation}
where $C$ is the same constant as in Theorem\/~$2$, assuming
$4R\kern .4pt Ce^{-\pi L} < 1$.
\end{theorem}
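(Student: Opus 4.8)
The plan is to argue by contradiction, combining Theorem~3 with the Koebe one-quarter theorem. Write $\delta = 4R\kern.4pt Ce^{-\pi L}$, so that $\delta<1$ by hypothesis, and let $\rho = 1+\delta$; I will show that $f$ cannot be univalent on the open disk $D_\rho=\{\,|z|<\rho\,\}$, which forces $r<\rho$ and hence~(\ref{rbound}). The point of introducing this specific $\delta$ is that it is exactly calibrated so that Theorem~3 supplies a boundary derivative large enough to drive the contradiction.

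First I would apply Theorem~3: under the present hypotheses there is a point $z_0$ with $|z_0|=1$ and $|f'(z_0)| > C^{-1}e^{\pi L} = 4R/\delta$. (Because $\Gamma$ is analytic, $f$ continues analytically across the unit circle, so $f'(z_0)$ is meaningful.) Now suppose, toward a contradiction, that $f$ is univalent on $D_\rho$. The open disk of radius $\delta$ about $z_0$ lies in $D_\rho$, since each of its points has modulus less than $1+\delta=\rho$, so $f$ is univalent there and the Koebe one-quarter theorem gives that $f$ maps it onto a set containing the disk of radius $\frac{1}{4}|f'(z_0)|\,\delta$ about $f(z_0)$; by the choice of $\delta$ this radius exceeds~$R$. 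Since $f(z_0)\in\Gamma$, we have $|f(0)-f(z_0)|\le R$ by the definition of~$R$, so $f(0)$ lies in that image disk, i.e.\ $f(0)=f(z')$ for some $z'$ with $|z'-z_0|<\delta$. Univalence on $D_\rho$ then forces $z'=0$, i.e.\ $|z_0|<\delta<1$, contradicting $|z_0|=1$. Hence $f$ is not univalent on $D_\rho$. Since the set of radii on which the analytic continuation of $f$ is univalent is an interval of the form $(0,r\kern.3pt]$ --- if $f$ is univalent on every $D_{\rho'}$ with $\rho'<r$ it is univalent on $D_r$ by a routine limiting argument --- the radius $\rho$ must exceed $r$, which is exactly~(\ref{rbound}).

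The delicate point, and the thing I would want to get right, is obtaining the linear dependence on $e^{-\pi L}$ rather than a weaker one: comparing $f'(z_0)$ with $f'(0)$ through the growth and distortion theorems for univalent maps of $D$ (together with $|f'(0)|\le R$ from the Schwarz lemma) would only yield $r=1+O(e^{-\pi L/3})$. The gain comes from applying Koebe in a disk centred at the \emph{boundary} point $z_0$, whose radius inside $D_\rho$ is exactly $\delta=\rho-1$, so that a single large value of $|f'|$ converts linearly into a large covered disk. Beyond this, the argument only uses the elementary facts that the disk of radius $\delta$ about $z_0$ sits inside $D_\rho$, that $|f(0)-f(z_0)|\le R$, and that the set of univalence radii has the stated form, none of which should cause trouble.
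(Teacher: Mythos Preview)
Your argument is correct and reaches the same bound with the same constant~$4$, but it is genuinely different from the paper's proof.  The paper rescales to $h(z)=(f(rz)-f(0))/(rf'(0))$, univalent on the unit disk, and applies the distortion inequality $|h'(z)|\le (1+|z|)\,|h(z)|/(|z|(1-|z|))$ at $|z|=r^{-1}$ to obtain the upper bound $|f'(z_0)|\le 2R/(1-r^{-1})$ for every $z_0$ on the unit circle; combining this with the lower bound $|f'(z_0)|>C^{-1}e^{\pi L}$ from Theorem~3 and inverting gives~(\ref{rbound}).  You instead localize: assuming univalence on $D_{1+\delta}$, you apply the Koebe one-quarter theorem on the disk of radius $\delta$ about the boundary point $z_0$ supplied by Theorem~3, so that the large derivative there forces the image to cover a disk of radius ${>}\,R$ about $f(z_0)\in\Gamma$, hence to contain $f(0)$, contradicting injectivity.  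Your route is a bit more geometric---one sees directly why a large boundary derivative on a thin annulus kills univalence---while the paper's route is a straight computation that never needs to name a second preimage.  Both arguments rest on the same circle of growth/distortion results for univalent functions, and it is a pleasant check that the constant comes out identical.
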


\begin{proof}
Among the results of univalent function theory are certain
{\em distortion theorems}~\cite{ahl,gm}.  Let
$h$ be a univalent function in the unit disk
with $h(0) = 0$ and $h'(0) = 1$.  Then it is shown in
Theorem I.4.5 of~\cite{gm} that
for any $z$ with $|z|<1$,
\begin{equation}
|h'(z)| \le {(1+|z|)\kern .4pt |h(z)|\over |z|\kern .4pt (1-|z|)}.
\label{dist}
\end{equation}
To apply this to our function $f$ with radius of
univalence $r$, define
\begin{equation} 
h(z) =  {f(rz) - f(0) \over r f'(0)},
\label{hdef}
\end{equation}
which implies $h'(z) =  f'(rz)/f'(0)$.
Then applying (\ref{dist}) at a point $z$ with $|z|=r^{-1}$, so that $rz$
is on the unit circle, gives
\begin{equation}
\left | {f'(rz) \over f'(0)}\right| \le
{(1+r^{-1})\kern .4pt |h(z)|\over r^{-1}-r^{-2}},
\label{distf}
\end{equation}
or by (\ref{hdef}),
\begin{equation}
|f'(rz)| \le {(1+r^{-1})\kern .4pt |f(rz)-f(0)|\over 1-r^{-1}}.
\label{distf2}
\end{equation}
Since $r>1$ and $|f(rz)-f(0)|\le R$, this gives us
\begin{equation}
|f'(rz)| \le { 2R\over 1-r^{-1}}.
\label{distf3}
\end{equation}
Now assume that $rz$ is one of the points for which 
Theorem~3 gives the bound $|f'(rz)| > M = C^{-1}e^{\pi L}$.  Then
(\ref{distf3}) becomes $M < 2R/(1-r^{-1})$,
which is equivalent to
\begin{equation}
r < {1\over 1 - 2R/M}
\label{neweq}
\end{equation}
since the assumptions $4R\kern .4pt Ce^{-\pi L} < 1$ and $L>1$ imply
that $2R/M \le 1/2$, so that $1-2R/M$ is positive.
Since $(1-\varepsilon)^{-1}\le
1+2\kern .3pt \varepsilon$ whenever $\varepsilon \le 1/2$, (\ref{neweq})
implies $r<1+4\kern .3pt R/M = 1+4\kern .3pt R\kern .4pt
Ce^{-\pi L}$ as required.
\end{proof}

Our final consequence of Theorem 2 is that if a polynomial is
a good approximation to the conformal map $f$, then its degree
must be on the order of $e^{\pi L}$ or larger.  For a finger
of length $6$, say, one must expect polynomial degrees in
the millions.  In the following statement, the ``end segment
$B$'' refers to the same end of the rectangle illustrated in
Figure~\ref{finger} and discussed in the proof of Theorem~2.
Note that the definitions of $R$ here and in the last theorem
are slightly different.  For simplicity, our statement assumes
that $\Gex$ consists of a single arc.

\begin{theorem}
Let\/ $\Omega = f(D)$ contain a finger of length $L>1$, let
$a=f(0)$ be a point outside the finger on one end,
and set $R = \sup_{w\in\Gamma} |w| \ge 1$.
Assume the protrusion of $\Omega$ beyond the other
end segment $B$ delimits just a
single segment of\/ $B$ of length $d$.
If\/ $p$ is a polynomial satisfying 
$\|f-p\| \le d/3$ on the unit disk, then the degree\/ $n$ of\/ $p$
satisfies
\begin{equation}
n > {e^{\pi L}\over 4\kern .3pt  R\kern .5pt C},
\label{bound4}
\end{equation}
where $C$ is the same constant as in Theorem~$2$.
\end{theorem}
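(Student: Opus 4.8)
The plan is to transplant the problem to the $z$-plane, where harmonic measure is just normalized arc length, and then to pit the exponentially small harmonic measure of the exit arc (Theorem~2) against the Bernstein inequality, which limits how fast a degree-$n$ polynomial can vary along $\partial D$.

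First I would set $E = f^{-1}(\Gex)\subseteq\partial D$. Since $\Gamma$ is a Jordan curve, $f$ extends to a homeomorphism of the closed unit disk onto $\overline\Omega$ (Osgood--Carath\'eodory), so $E$ is a single closed subarc of $\partial D$ — this is where the hypothesis that $\Gex$ is a single arc is used — with endpoints $z_1,z_2$. Because the protrusion of $\Omega$ beyond $B$ meets $B$ in one segment of length $d$, the two endpoints of $\Gex$ are precisely the endpoints of that segment, so $|f(z_1)-f(z_2)|=d$. By conformal invariance of harmonic measure together with the fact that harmonic measure at the center of $D$ is normalized arc length, the angular length $|E|$ equals $2\pi$ times the harmonic measure at $a=f(0)$ of $\Gex$ with respect to $\Omega$; by Theorem~2 this is $<C\kern .4pt d\kern .4pt e^{-\pi L}$.

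Next I would bring in the polynomial. From $\|f-p\|\le d/3$ on the closed disk, $|p(z_1)-p(z_2)|\ge |f(z_1)-f(z_2)|-2(d/3)=d/3$. Also $\|p\|\le\|f\|+d/3$, and $\|f\|=\sup_{w\in\Gamma}|w|=R$ by the maximum modulus principle (using $f(\partial D)=\Gamma$), so $\|p\|\le R+d/3$. By Bernstein's inequality, $\max_{|z|=1}|p'(z)|\le n\kern .4pt\|p\|\le n(R+d/3)$, and integrating $p'$ along the arc $E$ gives $|p(z_1)-p(z_2)|\le |E|\cdot n(R+d/3)<C\kern .4pt d\kern .4pt e^{-\pi L}\,n(R+d/3)$.

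Comparing the two estimates for $|p(z_1)-p(z_2)|$ yields $d/3<C\kern .4pt d\kern .4pt e^{-\pi L}\,n(R+d/3)$, hence $n>e^{\pi L}/\bigl(3C(R+d/3)\bigr)$; since $d\le 1\le R$ we have $R+d/3\le 4R/3$, which gives~(\ref{bound4}). The one place that needs care is the first step — confirming that $E$ is connected and that its endpoints map to points \emph{exactly} $d$ apart — which is exactly where the single-segment assumption enters; everything afterward is the routine pairing of an exponentially short preimage arc against a Bernstein-type smoothness bound for polynomials of degree $n$.
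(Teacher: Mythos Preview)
Your argument is correct and is essentially the same as the paper's: both identify the preimage arc $E=f^{-1}(\Gex)$, bound its angular length by $Cd\,e^{-\pi L}$ via Theorem~2, note that $|p(z_1)-p(z_2)|\ge d/3$, and combine with Bernstein's inequality $\|p'\|\le n(R+d/3)$. The only cosmetic difference is that the paper phrases the comparison as a pointwise lower bound $|p'(z)|\ge C^{-1}e^{\pi L}/3$ at some $z$ on the arc (via the mean value theorem), whereas you integrate the Bernstein bound along the arc; the resulting inequality and the final simplification $R+d/3\le 4R/3$ are identical.
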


\begin{proof}
The assumptions imply $\|\kern .3pt p\| \le R + d/3$ on
the unit disk, so by an inequality due to
	Bernstein~\cite[Cor.\ (6,4)]{marden},
$\|\kern .3pt p'(z)\| \le n (R+d/3)$.
On the other hand, as we shall explain in the next paragraph,
the assumptions together with Theorem~2 imply that for
some $z$ with $|z|=1$,
\begin{equation}
|\kern .3pt p'(z)|\ge C^{-1} e^{\pi L}/ 3.
\label{pbound}
\end{equation}
Combining these bounds gives $n > C^{-1} e^{\pi L}/(3(R+d/3))$.
With $d\le 1$, $R\ge 1$, and $L\ge 1$, this inequality
implies (\ref{bound4}).

To establish (\ref{pbound}), let $w_1 = f(z_1)$ and $w_2 =
f(z_2)$ be the endpoints of the segment of $B$ indicated in the
theorem statement, with $|w_2-w_1| = d$ and $|z_1|=|z_2|=1$.
Since $\|f-p\|\le d/3$, we have $|\kern .5pt p(z_2)-
p(z_1)| \ge d/3$.  To justify (\ref{pbound}), it suffices to
show $|\arg(z_2)-\arg(z_1)| \le d \kern .6pt Ce^{-\pi L}$.
Since $|\arg(z_2)-\arg(z_1)|$ is $\twopi$ times the harmonic
measure at $f(0)$ of the protrusion of $\Gamma$, this follows
from Theorem~2.  \end{proof}

\section{Conformal maps of regions with analytic boundaries}

\begin{figure}
\begin{center}
\vskip .09in
\includegraphics[scale=0.907]{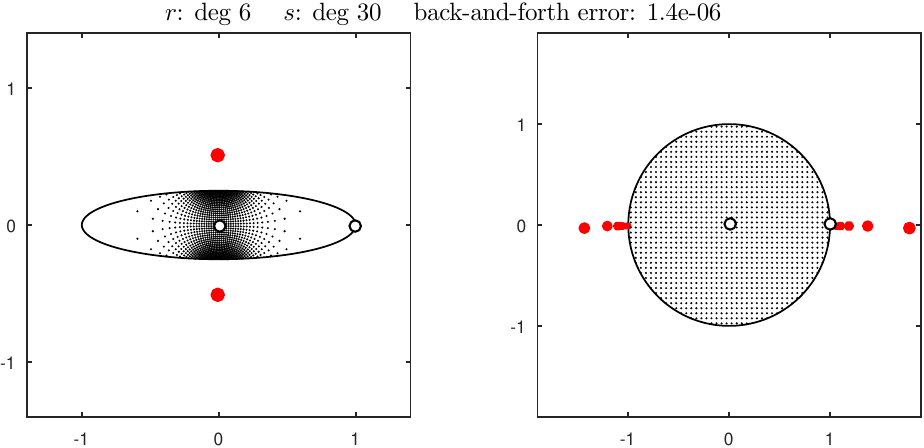}
\vskip .15in
\includegraphics[scale=0.907]{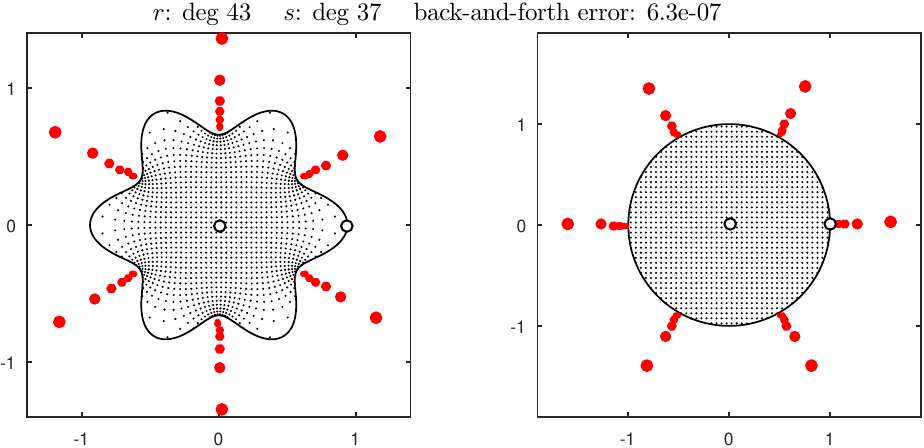}
\vskip .15in
\includegraphics[scale=0.907]{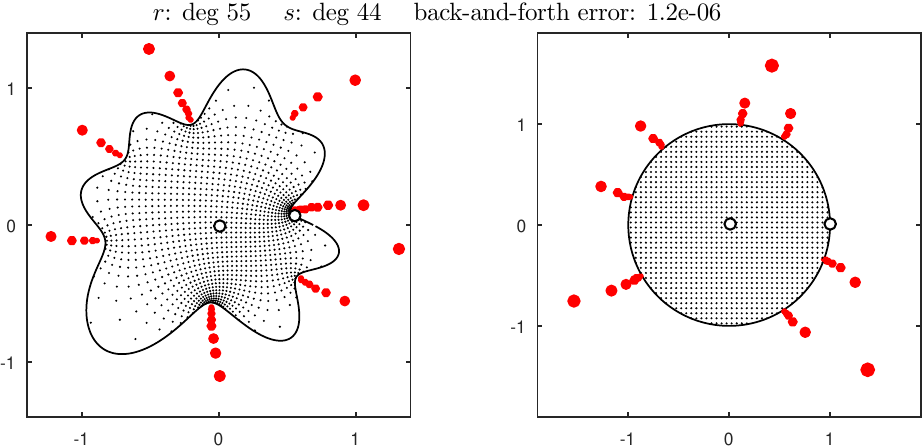}
\end{center}
\caption{\label{kerzman} Maps of three smooth domains to the unit
disk computed with the Kerzman--Stein integral equation, following
Caldwell, Li, and Greenbaum~{\rm \cite{clg,kerzstein,kerztrum}}.
Although the boundaries are analytic, these maps have singularities
nearby, as explained by the theorems of the
last section, and these are reflected in the poles of the adaptively
determined rational
approximations.  Specifically, the distances of the closest poles
to the unit disk in the plots of the second column are
about $0.0003$, $0.03$, and $0.007$,
respectively.}
\end{figure}

The prediction implicit in the last section is that even for
regions with analytic boundaries, rational approximations of
conformal maps will tend to have poles clustering nearby.
This prediction is borne out by Figures~\ref{kerzman}
and~\ref{twomore}, which show conformal maps of five regions with
analytic boundaries.  Qualitatively speaking, these images look
little different from those of section 3.  The computations were
done by a numerical implementation of the Kerzman--Stein integral
equation in Chebfun using codes developed by Caldwell, Li, and
Greenbaum~\cite{clg,kerzstein,kerztrum}, with a discretization by
800 points along the boundary.  Unlike the Schwarz--Christoffel
formula, the Kerzman--Stein equation constructs the mapping~$f$
in the direction from the problem domain to the unit disk.
As before, one of the attractions of rational approximations
is that they give representations of $f^{-1}$ as well as $f$.

The images in Figure~\ref{kerzman} show an ellipse with
semiaxes $1$ and $0.25$, a rounded ``snowflake'' defined in
polar coordinates by $r = 0.8 + 0.14\cos(6\kern .3pt \theta)$,
and a shape with a smooth random boundary defined in Chebfun
by setting $r(\theta)$ equal to
\verb|0.8+0.16*randnfun(0.6,[0 2*pi],'trig')|~\cite{randnfun}.
In each case the poles of the AAA rational approximation $r\approx
f$ are shown in the left panel and those of $s\approx f^{-1}$ are
shown in the right panel.  The degrees of $r$ and $s$ are listed
in the titles along with what we call the back-and-forth error,
defined as the maximum of $|w_j - r(s(w_j))|$ over the grid of
points $w_j$ plotted in the disk.  For these computations we
loosened the AAA tolerance from $10^{-7}$ to $10^{-6}$.

\begin{figure}
\begin{center}
\vskip .09in
\includegraphics[scale=0.907]{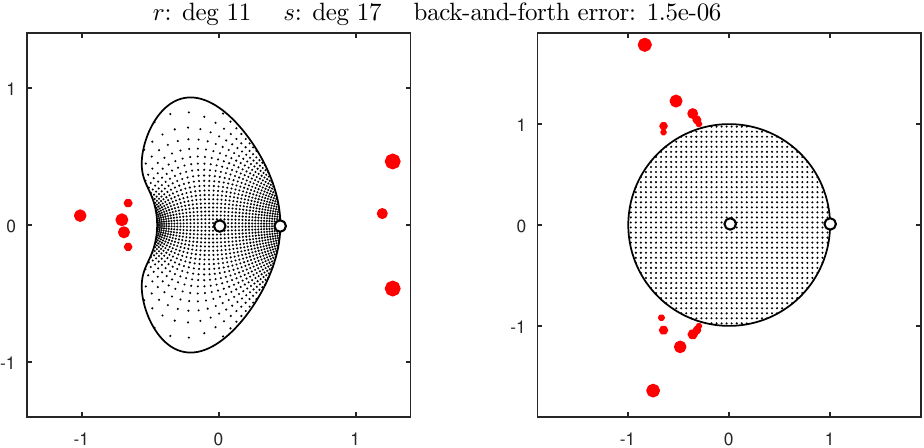}
\vskip .15in
\includegraphics[scale=0.907]{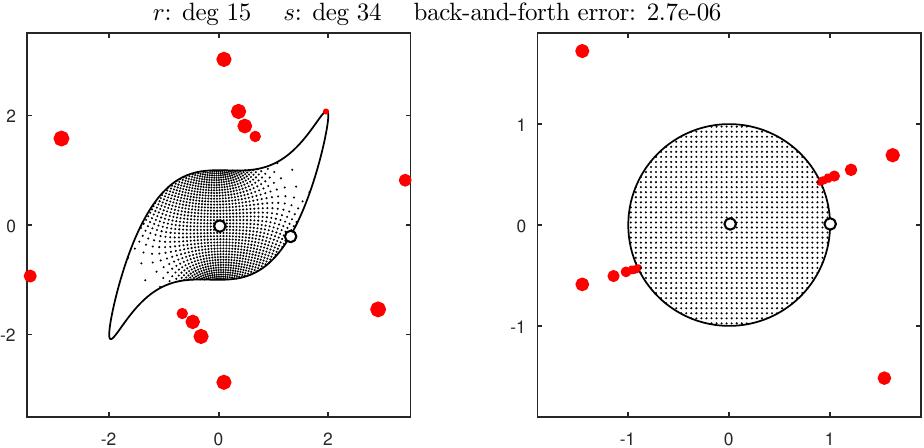}
\end{center}
\caption{\label{twomore} Maps of two more smooth domains.  The
``bean'' originates with Reichel in $1981$~{\rm\cite{reichel}} and
the ``blade'' with Ellacott in $1978$~{\rm\cite{ellacott}}.  The
poles of the rational approximations match only roughly the poles and
branch points of the exact maps.  The distances of the closest poles
to the unit disk in the second column are
about $0.05$ and $1.4\times 10^{-6}$.}
\end{figure}

As a further indication of the speed of rational representations
of conformal maps, suppose that for the region in the bottom
row of Figure~\ref{kerzman}, instead of a grid of about $10^3$
points in the disk (the exact number is 1264), we increase the
number to about $10^6$ and again compute $|w_j-r(s(w_j))|$,
thus applying a degree 55 rational function and also a degree 44
rational function one million times each.  On our laptop running
MATLAB, the total time is 1.7 seconds.  The maximum back-and-forth
error is $1.7\times 10^{-6}$, and the rms back-and-forth error
is $8.7\times 10^{-8}$.

Figure~\ref{twomore} shows two conformal maps
of regions with smooth boundaries taken from the classical
literature of numerical conformal mapping.  The
``bean'' in the first row, defined by the boundary map
$$
0.45\cos\theta + 0.225\cos(2\theta)-0.225
+ i\kern .5pt [\kern .3pt
0.7875\sin\theta + 0.225\sin(2\theta)-0.045\sin(4\theta)]
$$
for $\theta\in [\kern .3pt 0,2\pi]$,
was introduced by Reichel in the technical report
version of~\cite{reichel}
and considered also by Papamichael, et al.\ in~\cite{pwh}.
The ``blade'' in the second row, defined by the boundary map
$$
2\cos\theta + i(\sin\theta+ 2\cos^3\theta),
$$
was introduced by Ellacott~\cite{ellacott} and considered also
in~\cite{pwh,reichel}.  Both of these mapping problems were
discussed in~\cite{pwh} as examples in which the structure of
nearby poles and other singularities might be analyzed and exploited for
numerical approximation, just the opposite of the point of
view of the present paper.  The exact bean map has poles at
$\zeta_1 \approx -0.650$ and $\zeta_2\approx 1.311$, but the
rational approximation $r$ does not approximate these closely.
On the left we see five poles, not one, with clear asymmetry
about the real axis; as the authors of~\cite{pwh} point out,
the map additionally has square root branch points at $-0.566
\pm 0.068\kern .5pt i$.  As for $\zeta_2$, to the right of the
bean, the approximation $r$ shows three poles here,
not a single pole.  The pole near the real axis lies at
$1.200 + 0.079\kern .5pt i$.

The example of the blade region likewise illustrates that
exact singularities are only a rough guide to what may be
effective for rational approximation.  Here Papamichael, et
al.\ show that the exact map has simple poles at $\pm 2.885
\mp 1.584\kern .5pt i$, and these are matched to two digits
of accuracy by poles of $r$ at $2.908 - 1.557\kern .5pt i$
and $-2.889 +1.571\kern .5pt i$.  As with the bean region,
on the other hand, the portion of the figure with a concave
boundary segment is complicated by simple poles at $\pm 0.455
\pm 1.902\kern .5pt i$ with square root branch points nearby.
These are matched only roughly by poles of~$r$ at $0.475 +
1.813\kern .5pt i$ and $-0.475 - 1.787\kern .5pt i$.

\section{\label{numerics}Accuracy}
In the numerical experiments of Sections 3 and 5,
our rational approximations readily attain 7--8 digits of accuracy
almost everywhere in the domains, but fall to 2--4 digits close
to certain boundary points.  Let us call this the ``corner
effect'' (though domains with smooth boundaries are not immune).
This seems disturbing, but
as we shall now explain, it is actually a limitation not of
our rational approximation method, but of the accuracy of the
conformal mapping data we have been able to work with.

\begin{figure}
\begin{center}
\vskip .2in
\includegraphics[scale=.722]{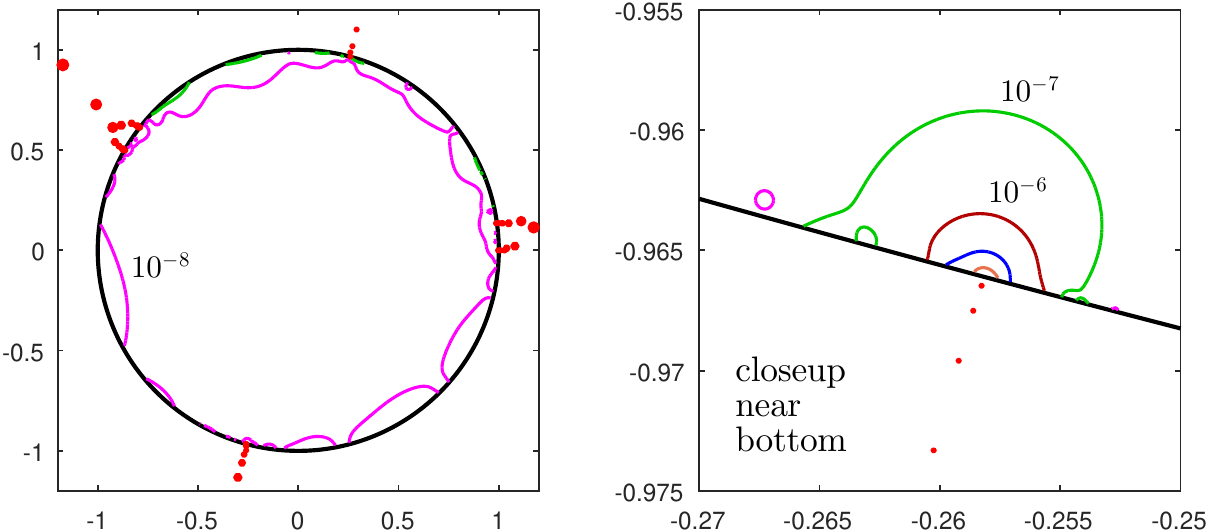}
\end{center}
\caption{\label{contplot}Error contours $|f(z)-r(z)| =
10^{-8},10^{-7},10^{-6},10^{-5},10^{-4}$ for the conformal map of
Figure\/~{\rm \ref{fig1_L}} of the unit disk onto an L-shaped
region.  On the left, mainly just the $10^{-8}$ contour is visible,
indicating that the error is ${<}\kern 1pt 10^{-8}$ in $90\%$ of the
disk and ${<}\kern 1pt 10^{-7}$ in most of the remainder.  Very
close to the vertices, however, the accuracy degrades, as shown by
the closeup on the right near the prevertex that maps to $w = 0$.
On this scale the $10^{-8}$ contour is not visible (apart from
a small bubble on the left) and we see
contours $10^{-7}$, $10^{-6}$, $10^{-5}$, and $10^{-4}$ as the
singular point is approached.  Thus, in a portion of
the disk which has an area on the order of\/ $10^{-6}$, the accuracy
falls to\/ $3$--$4$ digits.  This explains the max error
value $2.6\times 10^{-3}$ in the upper right panel of
Figure~$\ref{fig1_L}$.}
\end{figure}

First we present a figure to illustrate the effect.
Figure~\ref{fig1_L} showed a map~$f$ of the unit disk onto an
L-shaped region, and the upper-left image of that figure plotted
the unit disk together with the poles of the rational approximation
$r\approx f$.  For the same problem, Figure~\ref{contplot} plots
contours of the error $|f(z)-r(z)|$.  As explained in the caption,
the contours confirm that the error is smaller than $10^{-7}$
almost everywhere in the disk but much worse near the prevertices.

The corner effect does not result from inherent limitations in the
accuracy of rational approximations, since the root-exponential
convergence is very fast.
Our examples have shown that with ${\approx}\kern
1pt 10$ poles near each singularity, rational functions are
well capable of $10^{-3}$ accuracy, and there is no reason of
approximation theory why this could not be improved to $10^{-6}$
with ${\approx}\kern 1pt 40$ poles near each singularity.

The corner effect also does not result from limitations of the AAA
algorithm.  The algorithm does not produce an optimal approximant,
but our experience shows that it reliably comes within one or
two orders of magnitude of optimality on the discrete point set
it works with. 

Instead, the corner effect is a consequence of {\em inaccurate
boundary data.} As we now explain, to achieve an approximation
$r$ with $\|r-f\| = O(\varepsilon)$ (the maximum norm on the
domain) everywhere, it is necessary to sample the boundary map at
distances $O(\varepsilon)$ from prevertices or vertices with an
accuracy of $o(\varepsilon)$.  (The ``$O$'' and ``$o$'' symbols
here are heuristic, not precisely defined.)  Partly because of
the extreme ill-conditioning of conformal maps near corners,
as well as in other contexts, and partly because of additional
problems of accuracy of the SC Toolbox very near corners, this
is hard to achieve when $\varepsilon$ is much less than $10^{-4}$.

\begin{figure}
\begin{center}
\vskip .5in
\includegraphics[scale=.722]{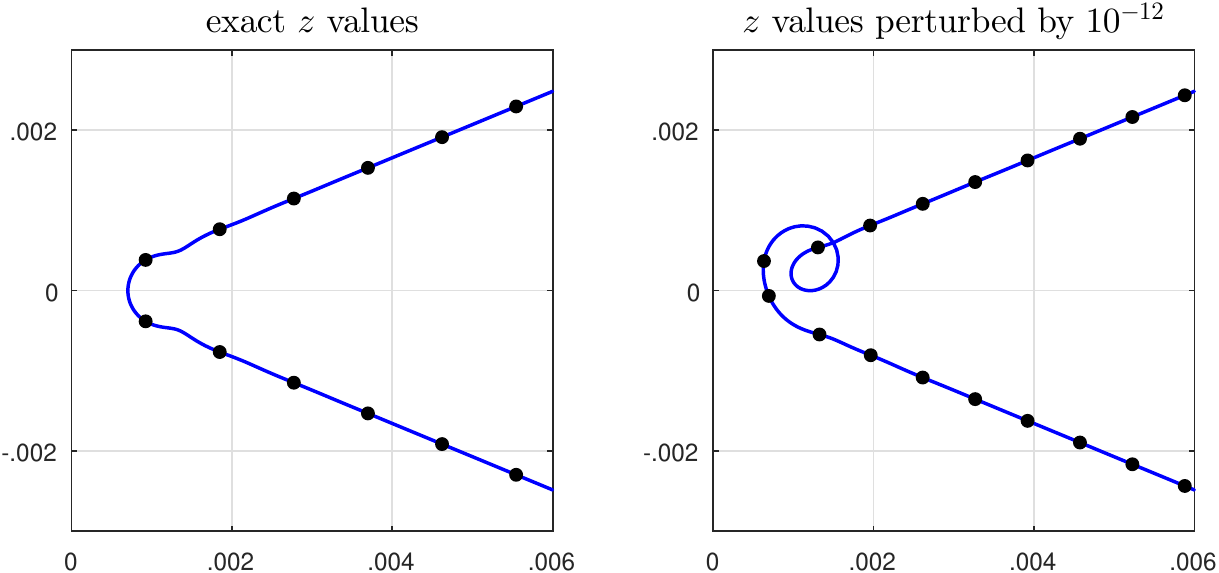}
\end{center}
\caption{\label{accfig} On the left, successful AAA approximation
near a\/ $z^{1/4}$ singularity.  On the right, the values $z_j$ have
been perturbed on the scale of $10^{-12}$, and the interpolant loses
accuracy near the corner.  Moreover, it now has poles inside the
approximation domain, implying that even though the error is still
very small almost everywhere in the domain, its maximum is\/
$\infty$.}
\end{figure}

Figure~\ref{accfig} illustrates the situation.  The problem
considered here is the map $w = f(z) = z^{1/4}$ of the right
half-plane onto the wedge of angle $\pi/4$ about the positive
real axis.  In the left panel of the figure, this function has been
approximated by the AAA algorithm with tolerance $10^{-7}$ based
on data at $200$ points $z_j$ fixed so that the images $f(z_j)$
are equispaced along the two sides of the wedge near $w=0$ out to
a distance~$0.1$.   Thus the points $z_j$ themselves lie in the
imaginary interval $[-10^{-4}\kern .5pt i, 10^{-4}\kern .5pt i]$,
and the two values closest to $0$ are $\pm 10^{-12}\kern .5pt i$.
The black dots in the figure are the data, and the blue curve
is the AAA approximant.  The behavior we see represents just
what one would hope for.  The curve closely matches the data and
interpolates smoothly in-between.

This left panel of Figure~\ref{accfig} reflects a rational
approximation with a maximal error on the order of $10^{-4}$.
To improve this to a level $O(\varepsilon)$ for some $\varepsilon
\ll 10^{-4}$, it is clear that the mesh will have to be refined
so that the data values near $w=0$ have spacing $O(\varepsilon)$.
This means the corresponding $z$ values will have minimal spacing
$O(\varepsilon^4)$.  And here is where we see the difficulty in
accuracy near corners.  In 16-digit floating point arithmetic,
spacings on the order of $\varepsilon^4$ will not be accurately
computable for $\varepsilon\ll 1$.  To illustrate this effect,
the right panel shows the same computation again, except that
instead of data values $f(z_j)$, the values are taken to be
$f(\tilde z_j)$, where $\tilde z_j$ is $z_j$ plus a random complex
perturbation of order $10^{-12}$.  Now the rational approximation,
while still excellent away from the vertex, has quite erroneous
behavior nearby.

Based on these curves alone, one might guess that the
approximation in the right panel of Figure~\ref{accfig} was, say,
ten times less accurate than for the one on the left.  However,
the loop in the curve is a hint that something more fundamental
is going wrong.  In fact, this rational function has poles in
the approximation domain---three of them, as it happens, with
real parts on the order of $10^{-8}$ and residues on the order
of $10^{-7}$.  From the point of view of the original problem,
these poles are spurious; they have been introduced by the
non-analytic perturbation of the fitting data.  They will have
negligible effect away from the vertex, but near the vertex,
there is a portion of the approximation domain in which the error
is infinite.  One might not notice this in an application, but it
can hardly be regarded as acceptable.  

We believe that the inaccuracy of available boundary data, often
due to the ill-conditioning of the conformal map, is usually what
limits the accuracy of AAA approximants.  If one could work with
perfect data on grids clustered very finely near singularities,
the rational approximants would have no trouble, but when the
data are imperfect, one loses accuracy correspondingly.  The SC
Toolbox, unfortunately, seems to have additional difficulties
near vertices that make the situation somewhat worse than what
is inevitable due to ill-conditioning alone.

One might imagine that conformal maps of smooth domains, since
there are no singularities on the boundary, would be exempt
from these accuracy limitations.  However, as we showed in Section 4,
this is not the case since there are often singularities exponentially
close to the boundary.

\section{Conclusion}
In this paper we established theorems concerning the
power of rational functions in approximating conformal
maps near singularities (Theorem 1) and the prevalence of
singularities and weakness of polynomial approximations
for such maps (Theorems 2--5).  We then showed how effective
AAA rational approximations are in realizing this
potential to represent conformal maps far more efficiently
than is usual.  Their application is fast and easy to
apply, capturing singularities to good accuracy without the need
for any analysis.  In a few seconds
one typically gets a representation of both a conformal map $f$
and its inverse $f^{-1}$ that can be applied to map points back
and forth in microseconds on a laptop.  The main complication,
as with most computing with rational functions, it that it is
advisable to monitor whether spurious poles have appeared in the
region of approximation.  If so, they can usually be removed by
loosening the convergence tolerance, improving the accuracy of
the data, or refining the grid.

Our approximations are rational functions of type $(n,n)$,
typically with $10\le n\le 100$.  To get comparable accuracy with
polynomial approximations, one would often
need degrees in the millions.

Our discussion has concerned simply connected regions, but the
same methods apply to conformal maps of multiply connected
regions~\cite{bds}, and indeed, to analytic and meromorphic
functions more generally.

\begin{acknowledgements}
This paper originated in stimulating discussions with Anne
Greenbaum and Trevor Caldwell about their computations with
the Kerzman--Stein integral equation, and Grady Wright gave
key assistance in a Chebfun implementation.  The heart of the
paper is Schwarz--Christoffel mapping, which is made numerically
possible by Toby Driscoll's marvelous SC Toolbox.  Driscoll,
and Yuji Nakatsukasa offered helpful advice along the way, and
the suggestions of Dmitry Belyaev, Chris Bishop, and Tom DeLillo were crucial
for developing the theorems of Section~4.  Among other
things, Belyaev caught an error in an early version of
Theorems~2 and~3 and Bishop pointed us to
Theorem~6.1 of~\cite{gm} and
proposed the idea of Theorem 4.  Much of 
this article was written during an extremely
enjoyable 2017--18 sabbatical visit by the second
author to the Laboratoire de l'Informatique du Parall\'elisme at
ENS Lyon hosted by Nicolas Brisebarre, Jean-Michel Muller, and
Bruno Salvy.
\end{acknowledgements}

\end{document}